\documentclass{nm}
\setcounter{page}{1}


\usepackage{charter}
\usepackage[charter]{mathdesign}
\usepackage{multirow}

\begin{document}


\markboth{Yan Gui, Rui Du, Cheng Wang}{3rd-order IMEX-RK Method for LL Equation with Arbitrary Dmping}
\title{A Third-order Implicit-Explicit Runge-Kutta Method for Landau-Lifshitz Equation with Arbitrary Damping Parameters}


\author[Yan Gui, Rui Du and Cheng Wang]{Yan Gui\affil{1}, Rui Du\affil{2}\comma\corrauth and Cheng Wang\affil{3}}
\address{\affilnum{1}\ School of Mathematical Sciences, Soochow University, Suzhou 215006, China\\
\affilnum{2}\ School of Mathematical Sciences, Soochow University, Suzhou 215006, China\\
\affilnum{3}\ Mathematics Department, University of Massachusetts, North Dartmouth, MA 02747, USA}

\emails{{\tt 20204007008@stu.suda.edu.cn} (Y. Gui), {\tt durui@suda.edu.cn} (R. Du), {\tt cwang1@umassd.edu} (C. Wang)}
%

\begin{abstract}
A third-order accurate implicit-explicit Runge-Kutta time marching numerical scheme is proposed and implemented for the Landau-Lifshitz-Gilbert equation, which models magnetization dynamics in ferromagnetic materials, with arbitrary damping parameters. This method has three remarkable advantages:~(1) only a linear system with constant coefficients needs to be solved at each Runge-Kutta stage, which greatly reduces the time cost and improves the efficiency; (2) the optimal rate convergence analysis does not impose any restriction on the magnitude of damping parameter, which is consistent with the third-order accuracy in time for 1-D and 3-D numerical examples; (3) its unconditional stability with respect to the damping parameter has been verified by a detailed numerical study. In comparison with many existing methods, the proposed method indicates a better performance on accuracy and efficiency, and thus provides a better option for micromagnetics simulations.
\end{abstract}

\keywords{Landau-Lifshitz equation, implicit-explicit Runge-Kutta time discretization, third-order, linear systems with constant coefficients, arbitrary damping.}

\ams{35K61, 65N06, 65N12}

\maketitle


\section{Introduction}\label{S:Introduction}
The Landau-Lifshitz (LL) equation has been widely used to describe the evolution of magnetic order (magnetization) in continuum ferromagnetic materials \cite{Landau1935On, gilbert1955lagrangian}, which is a vectorial and non-local nonlinear system with non-convex constraint in a point-wise sense and possible degeneracy. A crucial issue in the LL equation is to design efficient and high-order numerical schemes, and considerable progresses have been made in the past few decades; see \cite{An2022, kruzik2006recent, CJreview2007, cimrak2007survey, EAJAM-14-601} for reviews and references therein. Explicit algorithms (e.g. \cite{alouges2006convergence,bartels2008numerical}) and semi-implicit schemes (e.g. \cite{weinan2001numerical, wang2001gauss, cimrak2005error, gao2014optimal, boscarino2016high,an2016optimal,An2021, NMTMA-16-182}) are very popular since they avoid a complicated nonlinear solver while preserving the numerical stability, in comparison with the fully implicit ones (e.g. \cite{bartels2006convergence, fuwa2012finite}). 

One typical semi-implicit method is based on the backward differentiation formula (BDF) temporal discretization, combined with one-sided extrapolation for nonlinear terms \cite{xie2020second, chen2021convergence, Lubich2021}. In \cite{chen2021convergence}, the second-order BDF approximation is applied to obtain an intermediate magnetization, and the right-hand-side nonlinear terms are treated in a semi-implicit style with a second-order extrapolation applied to the explicit coefficients. A projection step is further used to preserve the unit length of magnetization at each time step, which poses a non-convex constraint. Such a numerical algorithm, called semi-implicit projection method (SIPM), leads to a linear system of equations with variable coefficients and non-symmetric structure. As a result, no fast solver is available for this numerical system. Meanwhile, an unconditionally unique solvability of the semi-implicit scheme with large damping (SIPM with large damping) has been proved in~\cite{cai2022second}. The improvement is based on an implicit treatment of the constant-coefficient diffusion term, combined with a fully explicit extrapolation approximation of the nonlinear terms, including the gyromagnetic term and the nonlinear part of the harmonic mapping flow. A direct advantage could be observed in the fact that, the resulting numerical scheme only requires a standard Poisson solver at each time step, which greatly improves the computational efficiency. However, an unconditionally stability is only available for large damping parameter $\alpha>1$, while most magnetic material models correspond to a parameter $\alpha\ll 1$. In addition, higher-order BDF methods could be applied, while only the first-order and second-order BDF algorithms are unconditionally stable. As analyzed in \cite{Lubich2021}, for the BDF schemes of orders 3 to 5, combined with finite element spatial discretization, the numerical stability requires the damping parameter to be above a positive threshold: $\alpha > \alpha_k$ with $\alpha_k=0.0913, 0.4041, 4.4348$ for order $k=3,4,5$ respectively. Therefore, it would be highly desirable to design an efficient and higher accurate scheme with no requirement on the damping parameter.

For time-dependent nonlinear partial differential equations in general, implicit-explicit (IMEX) schemes have been extensively used \cite{boscarino2016high}. For the LL equation, the second-order IMEX has been studied in \cite{xie2020second}. Two linear systems, with variable coefficients and non-symmetric structure, need to be solved. Hence IMEX2 can hardly compete with BDF2 in terms of accuracy and efficiency. In a recent  work~\cite{wang2020local}, the authors introduce an artificial linear diffusion term and treat it implicitly, while all the remaining terms are treated explicitly. Afterwards, the second-order and the third-order implicit-explicit Runge-Kutta (IMEX-RK2, IMEX-RK3) methods, in which the popular coefficients are derived by the work~\cite{ascher1997implicit}, were proposed for the LL equation in a recent work~\cite{Gui24a}.  Moreover, extensive numerical results have demonstrated that the IMEX-RK2 method has a better performance over the BDF2 approach, in terms of accuracy and efficiency. These IMEX-RK methods worked well for arbitrary damping, and this is a very significant fact in scientific computing, since the damping parameter may be small in most magnetic materials \cite{Brown1963micromagnetics}. However, the corresponding theoretical analysis becomes a very difficult issue, because of the complicated structure of the RK coefficients. 

In other words, higher-order RK numerical schemes could be appropriately constructed, while, the theoretical analysis of any specific IMEX-RK3 scheme, including the linearized stability estimate and optimal rate convergence analysis, is expected to be much more challenging, due to the complicated coefficient stencil in the Runge-Kutta stages. In this paper, we propose a third-order accurate IMEX-RK scheme, whose coefficients come from the order conditions based on the Taylor expansion~\cite{2010Implicit}. Furthermore, we conduct an unconditional stability analysis which does not rely on the value of $\alpha$. More importantly, an improvement in efficiency and stability over the above-mentioned numerical methods will be clearly demonstrated. 

This paper is organized as follows. In Section~\ref{S:method}, we first present the LL model and give a brief introduction of the IMEX-RK schemes. The third-order IMEX-RK scheme is constructed in Section~\ref{Section:IMEX RK3 scheme} by the aid of order condition, and the stability condition of scheme proposed is proved from a theoretical point of view. Section~\ref{S:inequalities} is devoted to the related inequalities to facilitate the theoretical analysis. The convergence analysis and error estimate of the proposed IMEX-RK3 scheme is provided in Section~\ref{sec: convergence}. Accuracy tests are presented in Section~\ref{section:numercial tests} with a detailed check for the dependence on the artificial damping parameter. Finally, some concluding remarks are made in Section~\ref{section:conclusion}.

\section{The model and the proposed numerical method}\label{S:method}

\subsection{The Landau-Lifshitz equation}\label{model}
The dynamics of the magnetization in a ferromagnetic material occupying a bounded region $\Omega$ is governed by the Landau-Lifshitz, which reads as
\begin{align}
	&\boldsymbol{m}_{t}=-\boldsymbol{m} \times \boldsymbol{h}_{\mathrm{eff}}-\alpha \boldsymbol{m} \times\left(\boldsymbol{m} \times \boldsymbol{h}_{\mathrm{eff}}\right) ,\label{eq-1}\\
	&{{\left. \frac{\partial \boldsymbol m}{\partial \boldsymbol \nu } \right|}_{\Gamma}}=0.\label{eq-2}
\end{align}

In more details, consider the homogeneous Neumann boundary condition \eqref{eq-2}, $\Gamma=\partial \Omega$ and the magnetization $\boldsymbol m:\Omega \subset {\mathbb{R}^{d}}\to {\mathbb{S}^{2}},d=1,2,3$ is a 3-D vector field with $\left| \boldsymbol m \right| \equiv 1$. Here $\boldsymbol \nu$ is the unit outward normal vector along $\Gamma$. The first term of the right hand side of \eqref{eq-1} is the gyromagnetic term, while the
second term represents the damping term with a dimensionless damping parameter $\alpha>0$.

The effective field of a uniaxial material ${{\boldsymbol h}_{\text{eff}}}=-\frac{\delta F[\boldsymbol{m}]}{\delta \boldsymbol{m}}$ is computed from the free anergy functional
\begin{align*}
	F[\boldsymbol{m}]=\frac{\mu_{0} M_{s}^{2}}{2} \int_{\Omega}\left(\epsilon|\nabla \boldsymbol{m}|^{2}+Q\left(m_{2}^{2}+m_{3}^{2}\right)-\boldsymbol{h}_{s} \cdot \boldsymbol{m}-2 \boldsymbol{h}_{e} \cdot \boldsymbol{m}\right) \mathrm{d} \boldsymbol{x},
\end{align*}
corresponding to the exchange energy, the anisotropy energy, the magnetostatic energy, and the Zeeman energy parts, respectively. Here we have ${{\boldsymbol h}_{\text{eff}}}=\epsilon \Delta \boldsymbol m-Q({{m}_{2}}{{\boldsymbol e}_{2}}+{{m}_{3}}{{\boldsymbol e}_{3}})+{{\boldsymbol h}_{s}}+{{\boldsymbol h}_{e}}$, consists of the exchange field, the anisotropy field, the stray field ${{\boldsymbol h}_{\text{s}}}$, and the external field ${{\boldsymbol h}_{\text{e}}}$. In this formula, $Q={{K}_{u}}/({{\mu }_{0}}M_{s}^{2})$ and $\epsilon ={{C}_{ex}}/({{\mu }_{0}}M_{s}^{2}{{L}^{2}})$ are the dimensionless parameters with ${{C}_{ex}}$ the exchange constant, ${{K}_{u}}$ is the anisotropy constant, $L$ is the diameter of ferromagnetic body, ${{\mu }_{0}}$ is the permeability of vacuum, and $M_{s}$ stands for the saturation magnetization, respectively.
The two unit vectors are given by ${{\boldsymbol e}_{2}}={{(0,1,0)}^{T}}$, ${{\boldsymbol e}_{3}}={{(0,0,1)}^{T}}$, and the stray field ${{\boldsymbol h}_{\text{s}}}$ takes the form
\begin{align*}
	{{\boldsymbol h}_{s}}=-\nabla \int_{\Omega }{\nabla N(\boldsymbol  x-\boldsymbol y)\cdot \boldsymbol m(\boldsymbol y)\mathrm{d}\boldsymbol y},
\end{align*}
where $N(\boldsymbol x)=-\frac{1}{4\pi \left| \boldsymbol x \right|}$ is the Newtonian potential. The following notation is made to simplify the presentation: 
\begin{align}\label{lowerterm}
	\emph{ \textbf{f}}=-Q({{m}_{2}}{{\boldsymbol e}_{2}}+{{m}_{3}}{{\boldsymbol e}_{3}})+{{\boldsymbol h}_{s}}+{{\boldsymbol h}_{e}}.
\end{align}
Consequently, the LL equation \eqref{eq-1} could be reformulated as
\begin{align}
	{{\boldsymbol m}_{t}}=-\boldsymbol m\times (\epsilon \Delta \boldsymbol m+\emph{ \textbf{f}})-\alpha \boldsymbol m\times \boldsymbol m\times (\epsilon \Delta \boldsymbol m+\emph{ \textbf{f}}).
	\label{eq-3}
\end{align}
The LL equation has several equivalent forms. For instance, according to the formula 
\begin{equation}
	\boldsymbol a\times (\boldsymbol b\times \boldsymbol c)=(\boldsymbol a\cdot \boldsymbol c)\boldsymbol b-(\boldsymbol a\cdot \boldsymbol b)\boldsymbol c,   \qquad \boldsymbol a,\boldsymbol b, \boldsymbol c \in {{\mathbb{R}}^{3}},
\end{equation}
and by the fact of $|\boldsymbol{m}| \equiv 1$, an equivalent form could be deduced as follows 
\begin{align}
	\boldsymbol{m}_{t}=\alpha(\epsilon \Delta \boldsymbol{m}+\boldsymbol{f})+\alpha\left(\epsilon|\nabla \boldsymbol{m}|^{2}-\boldsymbol{m} \cdot \boldsymbol{f}\right) \boldsymbol{m}-\boldsymbol{m} \times(\epsilon \Delta \boldsymbol{m}+\boldsymbol{f}).
	\label{eq-4}
\end{align}

Some notations are needed in the numerical approximation. To ease the presentation, set $\Omega =( 0,1 )^d, d=1,2,3$, in which $d$ represents the dimension, and the final time is given by $T$. In the 1-D case, the domain $\Omega$ is divided into $N$ equal parts with $h=1/N$. In order to approximate the boundary condition \eqref{eq-2}, the ghost points are introduced as in Figure~\ref{fig1}, which displays a schematic picture of 1-D spatial grids, with ${{x}_{i-\frac{1}{2}}}=(i-\frac{1}{2})h,i=1,2,\cdots ,N$.
\begin{figure}[htbp]
	\centering
	\includegraphics[width=12cm,height=3cm]{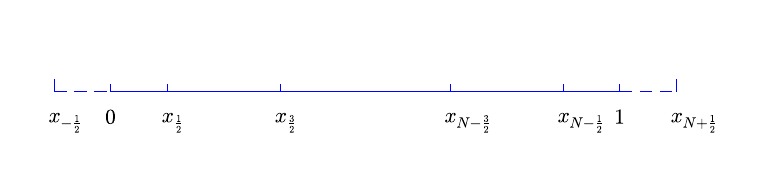}
	\caption{The 1-D spatial grids, where ${{x}_{-\frac{1}{2}}}$ and ${{x}_{N+\frac{1}{2}}}$ are two ghost points.}\label{fig1}
\end{figure}
The construction of the 3-D grid points is similar. For simplicity, we set $h_x=h_y=h_z = h,
{{\mathbf m}_{i,j,k}}=\mathbf m((i-\frac{1}{2})h,(j-\frac{1}{2})h,(k-\frac{1}{2})h), 0\le i,j,k\le N+1 $.
In the temporal discretization, we denote ${{t}^{n}}=nk$ with $k$ the step-size and $n\le \left[ \frac{T}{k} \right]$. Moreover, $\Delta_h \mathbf m$ represents the standard second-order centered difference stencil as
\begin{equation*}
	\begin{split}
		{{\Delta }_{h}}{{\mathbf m}_{i,j,k}} =& \frac{{{\mathbf m}_{i+1,j,k}}-2{{\mathbf m}_{i,j,k}}+{{\mathbf m}_{i-1,j,k}}}{{h^{2}}}\\
		&+\frac{{{\mathbf m}_{i,j+1,k}}-2{{\mathbf m}_{i,j,k}}+{{\mathbf m}_{i,j-1,k}}}{{h^{2}}}\\
		&+\frac{{{\mathbf m}_{i,j,k+1}}-2{{\mathbf m}_{i,j,k}}+{{\mathbf m}_{i,j,k-1}}}{{h^{2}}}.
	\end{split}
\end{equation*}
A second-order approximation to the Neumann boundary condition results in
\begin{eqnarray}
	\begin{aligned}
		& {{\mathbf m}_{0,j,k}}={{\mathbf m}_{1,j,k}}, \, \, \, {{\mathbf m}_{N,j,k}}={{\mathbf m}_{N+1,j,k}}, \, \, \, j,k=1,\cdots,N ,  \\
		& {{\mathbf m}_{i,0,k}}={{\mathbf m}_{i,1,k}}, \, \, \, {{\mathbf m}_{i,N,k}}={{\mathbf m}_{i,N+1,k}}, \, \, \, i,k=1,\cdots,N, \\
		& {{\mathbf m}_{i,j,0}}={{\mathbf m}_{i,j,1}}, \, \, \, {{\mathbf m}_{i,j,N}}={{\mathbf m}_{i,j,N+1}}, \, \, \, i,j=1,\cdots,N.
	\end{aligned}
	\label{BC-1}
\end{eqnarray}

\subsection{Implicit-explicit Runge-Kutta methods}\label{IMEX-RK}
For any time-dependent nonlinear equation, the key point of implicit-explicit (IMEX) numerical method relies on an implicit treatment of the dominant linear term and explicit treatment of the remaining terms~\cite{ascher1997implicit}. In fact, such an implicit treatment of the dominant linear term is necessary to ensure a numerical stability. However, the linear diffusion term does not dominate the magnetization dynamics in the LL equation, and thus a direct application of IMEX method is not appropriate. Motivated by this observation and the work in \cite{wang2020local}, a natural approach is to add an artificial diffusion term, then apply RK method to the time discretization.

Following this idea, we introduce an artificial Laplacian term $\beta\Delta \boldsymbol{m}$ into~\eqref{eq-3} and rewrite the LL equation as
\begin{align}
	{{\boldsymbol m}_{t}} = \underbrace{-\boldsymbol m\times (\epsilon \Delta \boldsymbol m+\emph{ \textbf{f}})-\alpha \boldsymbol m\times \boldsymbol m\times (\epsilon \Delta \boldsymbol m+\emph{ \textbf{f}}) -\beta   {{\Delta }} \boldsymbol m}_{{N(t,\boldsymbol m)}}+\underbrace{\beta   {{\Delta }}  \boldsymbol m}_{{L(t,\boldsymbol m)}},
	\label{eq-9}
\end{align}
in which the artificial term is denoted as $L(t,\boldsymbol m)$, and all the remaining terms are included in $N(t,\boldsymbol m)$.

An IMEX Runge-Kutta scheme consists of applying an implicit discretization to the linear term and an explicit computation of the nonlinear term. Its application to~\eqref{eq-9} takes the form
\begin{align}
	& {{\boldsymbol m}^{(i)}}={{\boldsymbol m}^{n}}-k\sum\limits_{j=1}^{i-1}{{{{\tilde{a}}}_{ij}}{{N}^{(j)}}(t,\boldsymbol m)}+k\sum\limits_{j=1}^{s}{{{a}_{ij}}{{L}^{(j)}}(t,\boldsymbol m)}, \\
	& {{\boldsymbol m}^{n+1}}={{\boldsymbol m}^{n}}-k\sum\limits_{i=1}^{s}{{{{\tilde{b}}}_{i}}{{N}^{(i)}}(t,\boldsymbol m)}+k\sum\limits_{i=1}^{s}{{{b}_{i}}{{L}^{(i)}}(t,\boldsymbol m)}.
	\label{eq-10}
\end{align}

In more details, the matrices $\tilde{A}=({{\tilde{a}}_{ij}})$, ${{\tilde{a}}_{ij}}=0$ for $j\ge i$ and $A=({{a}}_{ij})$ are $s\times s$ matrices such that the resulting algorithm is explicit in $N(t,\boldsymbol m)$ and implicit in $L(t,\boldsymbol m)$. An IMEX Runge-Kutta scheme is characterized by these two matrices and the coefficient vectors $\tilde{b}={{({{\tilde{b}}_{1}},{{\tilde{b}}_{2}},\cdots ,{{\tilde{b}}_{s}})}^{T}},b={{({{b}_{1}},{{b}_{2}},\cdots ,{{b}_{s}})}^{T}}$, with $k$ the step-size.

For the sake of simplicity and numerical implementation efficiency at each step, it is natural to consider diagonally implicit Runge-Kutta (DIRK) schemes \cite{2006Solving}.

The IMEX Runge-Kutta scheme can be represented by a double tableau in the usual Butcher notation,

\begin{minipage}[c]{0.5\textwidth}
	\centering
	\begin{tabular}{c|c}
		$\tilde{c}$ & $\tilde{A}$ \\ \hline & $\tilde{b}^T$
	\end{tabular}
\end{minipage}
\begin{minipage}[c]{0.25\textwidth}
	\centering
	\begin{tabular}{l|l}
		$c$ & $A$ \\
		\hline & $b^{\top}$
	\end{tabular}
\end{minipage}\\
where the coefficients $\tilde{c}$ and $c$ are given by the usual relation
\begin{align}
	{{\tilde{c}}_{i}}=\sum\limits_{j=1}^{i-1}{{{{\tilde{a}}}_{ij}}}, \quad
	{{c}_{i}}=\sum\limits_{j=1}^{i}{{{a}_{ij}}}.
	\label{eq-11}
\end{align}

Notice that this relation may not be necessary, it is just simple to use higher order Taylor expansion of the exact and numerical solution by the ``rooted trees" theory. An application of a DIRK scheme for $L(t,\boldsymbol m)$ is a sufficient condition to ensure that $N(t,\boldsymbol m)$ is always explicitly evaluated.

\section{Construction of IMEX RK3 scheme and its stability condition}\label{Section:IMEX RK3 scheme}
\subsection{Order conditions}

The general technique to derive order conditions for a Runge-Kutta method is based on the Taylor expansion of the exact and numerical solution; the relevant derivation and more details are referred to \cite{2010Implicit}. Here we give the order conditions for IMEX Runge-Kutta schemes, up to the third order accuracy. It is assumed that the coefficients ${{\tilde{c}}_{i}},{{c}_{i}},{{\tilde{a}}_{ij}},{{a}_{ij}}$ satisfy 
condition~\eqref{eq-11}. In turn, the order conditions are derived as the follows. \\
First order
\begin{align}
	\sum_{i=1}^{s} \tilde{b}_i=1, \quad \sum_{i=1}^s b_{i}=1.
	\label{eq-12}
\end{align}
Second order
\begin{align}
	& \sum_i \tilde{b}_i \tilde{c}_i=1 / 2, \quad \sum_i b_i c_i=1 / 2,
	\label{eq-13}
\end{align}
\begin{align}
	& \sum_i \tilde{b}_i c_i=1 / 2, \quad \sum_i b_i \tilde{c}_i=1 / 2 .
	\label{eq-14}
\end{align}
Third order
\begin{equation}
	\begin{aligned}
		& \sum_{i j} \tilde{b}_i {\tilde{a}}_{ij} \tilde{c}_j=1 / 6, \quad \sum_i \tilde{b}_i \tilde{c}_i \tilde{c}_i=1 / 3, \\
		& \sum_{i j} b_i {a}_{ij} c_j=1 / 6, \quad \sum_i b_i c_i c_i=1 / 3 , 
		\label{eq-15}
	\end{aligned}
\end{equation}
\begin{equation}
	\begin{aligned}
		\sum_{i j} \tilde{b}_i {\tilde{a}}_{ij} c_j=1 / 6, \quad \sum_{i j} \tilde{b}_i {a}_{ij} \tilde{c}_j=1 / 6, \quad \sum_{i j} \tilde{b}_i {a}_{ij} c_j=1 / 6,\\
		\sum_{i j} b_i {\tilde{a}}_{ij} c_j=1 / 6, \quad \sum_{i j} b_i {a}_{ij} \tilde{c}_j=1 / 6, \quad \sum_{i j} {b}_i {\tilde{a}}_{ij} \tilde{c}_j=1 / 6,
		\label{eq-16}
	\end{aligned}
\end{equation}
$$
\begin{aligned}
	\sum_i \tilde{b}_i c_i c_i=1 / 3, \quad \sum_i \tilde{b}_i \tilde{c}_i c_i=1 / 3, \quad \sum_i b_i \tilde{c}_i \tilde{c}_i=1 / 3, \quad \sum_i b_i \tilde{c}_i c_i=1 / 3.
\end{aligned}
$$

Conditions~\eqref{eq-12}, \eqref{eq-13}~and~\eqref{eq-15} are the standard order conditions for the two tableau, conditions \eqref{eq-14} and \eqref{eq-16} arise because of the coupled nature of the RK algorithms.

\begin{remark}
	The order conditions will be greatly simplified if ${{\tilde{c}}_{i}}={{c}_{i}}$, and ${{\tilde{b}}_{i}}={{b}_{i}}$, i.e., these two tableau only differ by the value of the coefficients matrices. Because of this fact, the standard conditions are enough to guarantee that the combined scheme could achieve the corresponding order. It is noteworthy that this is only true for the RK schemes up to the third order accuracy.
\end{remark}

\subsection{Construction of IMEX RK3 scheme}
In this section, we focus on the construction of a third-order IMEX Runge-Kutta scheme, which satisfies the stability condition in our framework. For simplicity of presentation, we consider the case of ${{\tilde{c}}_{i}}={{c}_{i}}$ and ${{\tilde{b}}_{i}}={{b}_{i}}$. The discussion of a general method may be more complicated, while the formulation is similar.

For the convenience, we list the Butcher tableau with undetermined coefficients:  
\begin{equation}\label{eqn:RK3}
	\begin{array}{c|ccccc|ccccc}
		\rule{0pt}{20pt}
		0 & 0 & 0 & 0 & 0 & 0 & 0 & 0 & 0 & 0 & 0 \\
		{{c}_{2}} & 0 & {{a}_{22}} & 0 & 0 & 0 & {{\tilde{a}}_{21}} & 0 & 0 & 0 & 0 \\
		{{c}_{3}}& 0 & {{a}_{32}} & {{a}_{33}} & 0 & 0 & {{\tilde{a}}_{31}} &{{\tilde{a}}_{32}} & 0 & 0 & 0 \\
		{{c}_{4}} & 0 & {{a}_{42}} & {{a}_{43}} & {{a}_{44}} & 0 & {{\tilde{a}}_{41}} & {{\tilde{a}}_{42}} & {{\tilde{a}}_{43}} & 0 & 0 \\
		\hline & 0 & {{b}_{2}} & {{b}_{3}}& {{b}_{4}} & 0 & 0 & {{b}_{2}} & {{b}_{3}}& {{b}_{4}} & 0
	\end{array}
\end{equation}
and the associated relation gives
\begin{equation}
	\begin{aligned}
		& {{c}_{2}}={{a}_{22}}={{{\tilde{a}}}_{21}}, \\
		& {{c}_{3}}={{a}_{32}}+{{a}_{33}}={{{\tilde{a}}}_{31}}+{{{\tilde{a}}}_{32}}, \\
		&{{c}_{4}}={{a}_{42}}+{{a}_{43}}+{{a}_{44}}={{{\tilde{a}}}_{41}}+{{{\tilde{a}}}_{42}}+{{{\tilde{a}}}_{43}}.
		\label{eq-17}
	\end{aligned}
\end{equation}

Begin with the standard order condition~\eqref{eq-15}, the unknowns also need to satisfy these equalities to ensure that a numerical stability in our framework:
\begin{equation}
	\begin{aligned}
		&{{b}_{2}}+{{b}_{3}}+{{b}_{4}}=1,\\
		&{{b}_{2}}{{c}_{2}}+{{b}_{3}}{{c}_{3}}+{{b}_{4}}{{c}_{4}}=1/2,\\
		&{{b}_{3}}{{{\tilde{a}}}_{32}}{{c}_{2}}+{{b}_{4}}({{{\tilde{a}}}_{42}}{{c}_{2}}+{{{\tilde{a}}}_{43}}{{c}_{3}})  =1/6, \\
		& {{b}_{2}}c_{2}^{2}+{{b}_{3}}c_{3}^{2}+{{b}_{4}}c_{4}^{2}=1/3, \\
		&{{b}_{2}}c_{2}^{2}+{{b}_{3}}{{a}_{32}}{{c}_{2}}+{{b}_{3}}{{a}_{33}}{{c}_{3}}+{{b}_{4}}({{a}_{42}}{{c}_{2}}
		+{{a}_{43}}{{c}_{3}}+{{a}_{44}}{{c}_{4}})=1/6.
		\label{eq-18}
	\end{aligned}
\end{equation}
Accordingly, at time step $t_n$, the corresponding marching algorithm method becomes 
\begin{equation}\label{IMEX-RK3-general}
	\left\{\begin{array}{l}
		\boldsymbol{{\tilde{m}}}_{1}=\boldsymbol{{m}}_{n}, \\ {\boldsymbol{\tilde{m}}_{2}}=\boldsymbol{{\tilde{m}}}_{1}+k \left({{\tilde{a}}_{21}}N(t_{n}^{0}, {\boldsymbol{\tilde{m}}_{1}})+{{a}_{22}}L(t_{n}^{1}, \boldsymbol{{\tilde{m}}}_{2})\right), \\ {\boldsymbol{\tilde{m}}_{3}}=\boldsymbol{{\tilde{m}}}_{1}+k \left({{\tilde{a}}_{31}}N(t_{n}^{0}, {\boldsymbol{\tilde{m}}_{1}})+{{\tilde{a}}_{32}}N(t_{n}^{1}, {\boldsymbol{\tilde{m}}_{2}})\right)\\
		\qquad \quad +k \left({{a}_{32}}L(t_{n}^{1}, \boldsymbol{{\tilde{m}}}_{2})+{{a}_{33}}L(t_{n}^{2}, \boldsymbol{{\tilde{m}}}_{3})\right),\\
		{\boldsymbol{\tilde{m}}_{4}}=\boldsymbol{{\tilde{m}}}_{1}+k \left({{\tilde{a}}_{41}}N(t_{n}^{0}, {\boldsymbol{\tilde{m}}_{1}})+{{\tilde{a}}_{42}}N(t_{n}^{1}, {\boldsymbol{\tilde{m}}_{2}}+{{\tilde{a}}_{43}}N(t_{n}^{2}, {\boldsymbol{\tilde{m}}_{3}})\right)\\
		\qquad \quad +k \left({{a}_{42}}L(t_{n}^{1}, \boldsymbol{{\tilde{m}}}_{2})+{{a}_{43}}L(t_{n}^{2}, \boldsymbol{{\tilde{m}}}_{3})+{{a}_{44}}L(t_{n}^{3}, \boldsymbol{{\tilde{m}}}_{4})\right),\\
		\boldsymbol{{m}}_{n+1}=\boldsymbol{{\tilde{m}}}_{1}+k \left({{b}_{2}}N(t_{n}^{1}, {\boldsymbol{\tilde{m}}_{2}})+{{b}_{3}}N(t_{n}^{2}, {\boldsymbol{\tilde{m}}_{3}})+{{b}_{4}}N(t_{n}^{3}, {\boldsymbol{\tilde{m}}_{4}})\right)\\
		\qquad \quad +k \left({{b}_{2}}L(t_{n}^{1}, \boldsymbol{{\tilde{m}}}_{2})+{{b}_{3}}L(t_{n}^{2}, \boldsymbol{{\tilde{m}}}_{3})+{{b}_{4}}L(t_{n}^{3}, \boldsymbol{{\tilde{m}}}_{4})\right).
	\end{array}\right.
\end{equation}

\begin{remark}
	In the above formulation, by the aid of a fully explicit treatment for the nonlinear parts and implicit treatment for the linear part, the resulting numerical method only requires a standard Poisson solver at each time step. This fact will greatly reduce the computational cost, since the $\mathrm{FFT}$ fast solver could be efficiently applied, due to the constant coefficient $\mathrm{SPD}$ structure of the involved linear system.
\end{remark}

In a simple case with only linear diffusion term, we denote $L_h = \beta \Delta_h$ and take ${{b}_{2}}={{a}_{42}}$, ${{b}_{3}}={{a}_{43}}$, ${{b}_{4}}={{a}_{44}}$. The IMEX-RK3 scheme~\eqref{IMEX-RK3-general} is represented as
\begin{equation}\label{IMEX-RK3-Linear}
	\left\{\begin{array}{l}
		\boldsymbol{{\tilde{m}}}_{1}=\boldsymbol{{m}}_{n}, \\ {\boldsymbol{\tilde{m}}_{2}}=\boldsymbol{{\tilde{m}}}_{1}+k\left({{a}_{22}} L_h({\boldsymbol{\tilde{m}}_{2}})\right), \\
		{\boldsymbol{\tilde{m}}_{3}}=\boldsymbol{{\tilde{m}}}_{1}
		+k \left({{a}_{32}}L_h({\boldsymbol{\tilde{m}}_{2}})+{{a}_{33}}L_h({\boldsymbol{\tilde{m}}_{3}})\right),\\
		{\boldsymbol{\tilde{m}}_{4}}=\boldsymbol{{\tilde{m}}}_{1}
		+k \left({{a}_{42}}L_h({\boldsymbol{\tilde{m}}_{2}})+{{a}_{43}}L_h({\boldsymbol{\tilde{m}}_{3}})
		+{{a}_{44}}L_h({\boldsymbol{\tilde{m}}_{4}})\right),\\
		\boldsymbol{{m}}_{n+1}=\tilde{\boldsymbol m}_4.\\
	\end{array}\right.
\end{equation}

\subsection{Stability condition}
To facilitate the stability analysis of the method proposed above, the numerical system is rewritten as
\begin{align}
	&
	\boldsymbol{{\tilde{m}}}_{1}=\boldsymbol{{m}}_{n},
	\label{IMEX-RK3-Linear-1-1}
	\\
	&
	\frac{\tilde{\boldsymbol m}_2 - \boldsymbol m_n}{k} = {{a}_{22}}\beta \Delta_h \tilde{\boldsymbol m}_2 ,   \label{IMEX-RK3-Linear-1-2}
	\\
	&
	\frac{\tilde{\boldsymbol m}_3 - \boldsymbol m_n}{k} = {{a}_{32}}\beta  \Delta_h \tilde{\boldsymbol m}_2
	+ {{a}_{33}}\beta  \Delta_h \tilde{\boldsymbol m}_3 ,   \label{IMEX-RK3-Linear-1-3}
	\\
	&
	\frac{\tilde{\boldsymbol m}_4 - \boldsymbol m_n}{k} = {{a}_{42}}\beta  \Delta_h \tilde{\boldsymbol m}_2
	+ {{a}_{43}}\beta  \Delta_h \tilde{\boldsymbol m}_3+ {{a}_{44}}\beta  \Delta_h \tilde{\boldsymbol m}_4,   \label{IMEX-RK3-Linear-1-4}\\
	& \boldsymbol m_{n+1} = \tilde{\boldsymbol m}_4 . \label{IMEX-RK3-Linear-1-5}
\end{align}

In fact, the numerical stability of the Runge-Kutta algorithm could be demonstrated by subtracting~\eqref{IMEX-RK3-Linear-1-1} from \eqref{IMEX-RK3-Linear-1-2}, \eqref{IMEX-RK3-Linear-1-2} from \eqref{IMEX-RK3-Linear-1-3}, \eqref{IMEX-RK3-Linear-1-3} from \eqref{IMEX-RK3-Linear-1-4}. As a consequence, the following equivalent numerical system is obtained:
\begin{align}
	&
	\boldsymbol{{\tilde{m}}}_{1}=\boldsymbol{{m}}_{n},
	\label{IMEX-RK3-Linear-2-1}
	\\
	&
	\frac{\tilde{\boldsymbol m}_2 - \tilde{\boldsymbol m}_1}{k} = {{a}_{22}}\beta \Delta_h \tilde{\boldsymbol m}_2 ,   \label{IMEX-RK3-Linear-2-2}
	\\
	&
	\frac{\tilde{\boldsymbol m}_3 - \tilde{\boldsymbol m}_2}{k} = ({{a}_{32}}-{{a}_{22}})\beta \Delta_h \tilde{\boldsymbol m}_2+
	{{a}_{33}}\beta \Delta_h \tilde{\boldsymbol m}_3,   \label{IMEX-RK3-Linear-2-3}
	\\
	&
	\frac{\tilde{\boldsymbol m}_4 - \tilde{\boldsymbol m}_3}{k} = ({{a}_{42}}-{{a}_{32}})\beta \Delta_h \tilde{\boldsymbol m}_2+
	({{a}_{43}}-{{a}_{33}})\beta \Delta_h \tilde{\boldsymbol m}_3 + {{a}_{44}}\beta \Delta_h \tilde{\boldsymbol m}_4 ,   \label{IMEX-RK3-Linear-2-4}
	\\
	& \boldsymbol m_{n+1} = \tilde{\boldsymbol m}_4 . \label{IMEX-RK3-Linear-2-5}
\end{align}

In the first step, taking a discrete inner product with~\eqref{IMEX-RK3-Linear-2-2} by $2 \tilde{\boldsymbol m}_2$ yields 
\begin{equation}
	\| \tilde{\boldsymbol m}_2 \|_2^2 - \| \boldsymbol m_n \|_2^2 + \| \tilde{\boldsymbol m}_2 - \boldsymbol m_n \|_2^2
	+ 2{{a}_{22}}\beta k \| \nabla_h \tilde{\boldsymbol m}_2 \|_2^2 = 0 ,  \label{stability-1}
\end{equation}
with an application of the summation-by-parts formula. Similarly, taking a discrete inner product with~\eqref{IMEX-RK3-Linear-2-3} by $2 \tilde{\boldsymbol m}_3$,  with~\eqref{IMEX-RK3-Linear-2-4} by $2 \tilde{\boldsymbol m}_4$, turns out to be
\begin{equation}
	\begin{aligned}
		&
		\| \tilde{\boldsymbol m}_3 \|_2^2 - \| \tilde{\boldsymbol m}_2 \|_2^2 + \| \tilde{\boldsymbol m}_3 - \tilde{\boldsymbol m}_2 \|_2^2
		+ 2{{a}_{33}}\beta k \| \nabla_h \tilde{\boldsymbol m}_3 \|_2^2
		\\
		&
		= 2({{a}_{22}}-{{a}_{32}})\beta k \langle \nabla_h \tilde{\boldsymbol m}_2 ,  \nabla_h \tilde{\boldsymbol m}_3 \rangle ,  \label{stability-2}
	\end{aligned}
\end{equation}
\begin{equation}
	\begin{aligned}
		&
		\| \tilde{\boldsymbol m}_4 \|_2^2 - \| \tilde{\boldsymbol m}_3 \|_2^2 + \| \tilde{\boldsymbol m}_4 - \tilde{\boldsymbol m}_3 \|_2^2+ 2{{a}_{44}}\beta k \| \nabla_h \tilde{\boldsymbol m}_4 \|_2^2
		\\
		&
		= 2({{a}_{32}}-{{a}_{42}})\beta k \langle \nabla_h \tilde{\boldsymbol m}_2 ,  \nabla_h \tilde{\boldsymbol m}_4 \rangle + 2({{a}_{33}}-{{a}_{43}})\beta k \langle \nabla_h \tilde{\boldsymbol m}_3 ,  \nabla_h \tilde{\boldsymbol m}_4 \rangle. \label{stability-3}
	\end{aligned}
\end{equation}
Subsequently, a summation of~\eqref{stability-1}--\eqref{stability-3} gives 
\begin{equation}
	\begin{aligned}
		&
		\| \tilde{\boldsymbol m}_4 \|_2^2 - \| \boldsymbol m_n \|_2^2 + \| \tilde{\boldsymbol m}_2 - \boldsymbol m_n \|_2^2
		+ \| \tilde{\boldsymbol m}_3 - \tilde{\boldsymbol m}_2 \|_2^2
		\\
		&
		+ \| \tilde{\boldsymbol m}_4 - \tilde{\boldsymbol m}_3 \|_2^2
		+ 2{{a}_{22}}\beta k \| \nabla_h \tilde{\boldsymbol m}_2 \|_2^2
		+ 2{{a}_{33}}\beta k \| \nabla_h \tilde{\boldsymbol m}_3 \|_2^2
		\\
		&
		+ 2{{a}_{44}}\beta k \| \nabla_h \tilde{\boldsymbol m}_4 \|_2^2  = 2({{a}_{22}}-{{a}_{32}})\beta k \langle \nabla_h \tilde{\boldsymbol m}_2 ,  \nabla_h \tilde{\boldsymbol m}_3 \rangle
		\\
		&
		+2({{a}_{32}}-{{a}_{42}})\beta k \langle \nabla_h \tilde{\boldsymbol m}_2 ,  \nabla_h \tilde{\boldsymbol m}_4 \rangle+ 2({{a}_{33}}-{{a}_{43}})\beta k \langle \nabla_h \tilde{\boldsymbol m}_3 ,  \nabla_h \tilde{\boldsymbol m}_4 \rangle.
		\label{stability-4}
	\end{aligned}
\end{equation}
In turn, an application of Cauchy inequality reveals that 
\begin{align}
	&
	2({{a}_{22}}-{{a}_{32}})\beta k \langle \nabla_h \tilde{\boldsymbol m}_2 ,  \nabla_h \tilde{\boldsymbol m}_3 \rangle
	\le \left| {{a}_{32}}-{{a}_{22}} \right| \beta k ( \| \nabla_h \tilde{\boldsymbol m}_2 \|_2^2 + \|  \nabla_h \tilde{\boldsymbol m}_3  \|_2^2 ) ,
	\label{stability-5-1}
	\\
	&
	2({{a}_{32}}-{{a}_{42}})\beta k \langle \nabla_h \tilde{\boldsymbol m}_2 ,  \nabla_h \tilde{\boldsymbol m}_4 \rangle
	\le \left| {{a}_{42}}-{{a}_{32}} \right| \beta k ( \| \nabla_h \tilde{\boldsymbol m}_2 \|_2^2 + \|  \nabla_h \tilde{\boldsymbol m}_4  \|_2^2 ) ,
	\label{stability-5-2}
	\\
	&
	2({{a}_{33}}-{{a}_{43}})\beta k \langle \nabla_h \tilde{\boldsymbol m}_3 ,  \nabla_h \tilde{\boldsymbol m}_4 \rangle
	\le \left| {{a}_{43}}-{{a}_{33}} \right| \beta k ( \| \nabla_h \tilde{\boldsymbol m}_3 \|_2^2 + \|  \nabla_h \tilde{\boldsymbol m}_4  \|_2^2 ).
	\label{stability-5-3}
\end{align}
Going back~\eqref{stability-4}, we arrive at 
\begin{equation}
	\begin{aligned}
		&
		\| \tilde{\boldsymbol m}_4 \|_2^2 - \| \boldsymbol m_n \|_2^2 + \| \tilde{\boldsymbol m}_2 - \boldsymbol m_n \|_2^2
		+ \| \tilde{\boldsymbol m}_3 - \tilde{\boldsymbol m}_2 \|_2^2 + \| \tilde{\boldsymbol m}_4 - \tilde{\boldsymbol m}_3 \|_2^2
		\\
		&
		+ 2{{a}_{22}}\beta k \| \nabla_h \tilde{\boldsymbol m}_2 \|_2^2 + 2{{a}_{33}}\beta k \| \nabla_h \tilde{\boldsymbol m}_3 \|_2^2 + 2{{a}_{44}}\beta k \| \nabla_h \tilde{\boldsymbol m}_4 \|_2^2
		\\
		&
		\le \left| {{a}_{32}}-{{a}_{22}} \right| \beta k (\| \nabla_h \tilde{\boldsymbol m}_2 \|_2^2 + \|  \nabla_h \tilde{\boldsymbol m}_3  \|_2^2 )+\left| {{a}_{42}}-{{a}_{32}} \right| \beta k
		\\
		&
		( \| \nabla_h \tilde{\boldsymbol m}_2 \|_2^2 + \|  \nabla_h \tilde{\boldsymbol m}_4  \|_2^2 )+ \left| {{a}_{43}}-{{a}_{33}} \right| \beta k( \| \nabla_h \tilde{\boldsymbol m}_3 \|_2^2 + \|  \nabla_h \tilde{\boldsymbol m}_4  \|_2^2 ).
		\label{stability-5}
	\end{aligned}
\end{equation}
Of course, a careful simplification reveals that
\begin{equation}
	\begin{aligned}
		&
		\| \tilde{\boldsymbol m}_4 \|_2^2 - \| \boldsymbol m_n \|_2^2 + \| \tilde{\boldsymbol m}_2 - \boldsymbol m_n \|_2^2
		+ \| \tilde{\boldsymbol m}_3 - \tilde{\boldsymbol m}_2 \|_2^2+ \| \tilde{\boldsymbol m}_4 - \tilde{\boldsymbol m}_3 \|_2^2+ (2{{a}_{22}}
		\\
		&
		- \left| {{a}_{32}}-{{a}_{22}} \right| - \left| {{a}_{42}}-{{a}_{32}} \right| ) \beta k  \| \nabla_h \tilde{\boldsymbol m}_2 \|_2^2+ (2{{a}_{33}} - \left| {{a}_{32}}-{{a}_{22}} \right|- \left| {{a}_{43}}-{{a}_{33}} \right|)
		\\
		&
		\beta k  \| \nabla_h \tilde{\boldsymbol m}_3 \|_2^2+(2{{a}_{44}} - \left| {{a}_{42}}-{{a}_{32}} \right| - \left| {{a}_{43}}-{{a}_{33}} \right|)\beta k  \| \nabla_h \tilde{\boldsymbol m}_4 \|_2^2
		\le 0.
	\end{aligned}
\end{equation}
Because of the fact that $\boldsymbol m_{n+1} = \tilde{\boldsymbol m}_4$, an $\ell^\infty (0, T; \ell^2) \cap \ell^2 (0, T; H_{h}^{1})$ bound of the numerical solution could be derived for the IMEX-RK3 scheme, under the stability condition as follows:
\begin{equation}\label{IMEX-RK3-stability}
	\left\{\begin{array}{l}
		2{{a}_{22}} - (\left| {{a}_{32}} - {{a}_{22}} \right| + \left| {{a}_{42}}-{{a}_{32}} \right|  ) > 0, \\
		2{{a}_{33}} - (\left| {{a}_{32}} - {{a}_{22}} \right| + \left| {{a}_{43}}-{{a}_{33}} \right|) > 0,\\
		2{{a}_{44}} - (\left| {{a}_{42}} - {{a}_{32}} \right| + \left| {{a}_{43}}-{{a}_{33}} \right| ) > 0 ,
	\end{array}\right.
\end{equation}
which is equivalent to
\begin{equation}\label{IMEX-RK3-stability-2}
	\left\{\begin{array}{l}
		2{{a}_{22}} >  \left| {{a}_{32}} - {{a}_{22}} \right| + \left| {{a}_{42}}-{{a}_{32}} \right|,    \\
		2{{a}_{33}} > \left| {{a}_{32}} - {{a}_{22}} \right| + \left| {{a}_{43}}-{{a}_{33}} \right|,    \\
		2{{a}_{44}} > \left| {{a}_{42}} - {{a}_{32}} \right| + \left| {{a}_{43}}-{{a}_{33}} \right|.
	\end{array}\right.
\end{equation}

We are interested in whether there is a third order IMEX Runge-Kutta method that simultaneously satisfies \eqref{IMEX-RK3-stability-2} and \eqref{eq-18}. It is obvious that there are infinite set of solutions corresponding to the linear part and nonlinear part through numerical calculation. Here we provide a Butcher tableau which satisfies these conditions.

	\begin{scriptsize}
		\begin{equation*}\label{eqn:RK3-2}
			\begin{array}{c|cccc|cccc}
				\rule{0pt}{20pt}
				0 & 0 & 0 & 0 & 0  & 0 & 0 & 0 & 0  \\
				0.62500000 & 0 & 0.62500000 & 0 & 0  & 0.62500000 & 0 & 0 & 0 \\
				0.31347352& 0 & -0.23587004 & 0.54934357 & 0  & 0.17055712 & 0.14291640 & 0 & 0  \\
				1 & 0 & 0.08500000 & 0.68187464 & 0.23312535 & 0 & 0.45000000 & 0.55000000 & 0  \\
				\hline & 0 & 0.08500000 & 0.68187464 & 0.23312535 & 0 & 0.08500000 & 0.68187464 & 0.23312535 
			\end{array}
		\end{equation*}
	\end{scriptsize}

Consequently, the marching algorithm in IMEX-RK3 at time step $t_{n}$ becomes 
		\begin{equation}\label{IMEX-RK3}
			\left\{\begin{array}{l}
				\boldsymbol{{\tilde{m}}}_{1}=\boldsymbol{{m}}_{n}, \\ {\boldsymbol{\tilde{m}}_{2}}=\boldsymbol{{\tilde{m}}}_{1}+0.62500000kN(\boldsymbol{{\tilde{m}}}_{1})
				+0.62500000kL(\boldsymbol{{\tilde{m}}}_{2}), \\ {\boldsymbol{\tilde{m}}_{3}}=\boldsymbol{{\tilde{m}}}_{1}+0.17055712kN(\boldsymbol{{\tilde{m}}}_{1})
				+0.14291640kN(\boldsymbol{{\tilde{m}}}_{2})\\
				\qquad\quad-0.23587004kL(\boldsymbol{{\tilde{m}}}_{2})+0.54934357kL(\boldsymbol{{\tilde{m}}}_{3}),\\
				{\boldsymbol{\tilde{m}}_{4}}=\boldsymbol{{\tilde{m}}}_{1}+0.45000000kN(\boldsymbol{{\tilde{m}}}_{2})
				+0.55000000kN(\boldsymbol{{\tilde{m}}}_{3})\\
				\qquad \quad +0.08500000kL(\boldsymbol{{\tilde{m}}}_{2})+0.68187464kL(\boldsymbol{{\tilde{m}}}_{3})+0.23312535kL(\boldsymbol{{\tilde{m}}}_{4}),\\
				\boldsymbol{{m}}_{n+1}=\boldsymbol{{\tilde{m}}}_{1}+0.08500000kN(\boldsymbol{{\tilde{m}}}_{2})+0.68187464kN(\boldsymbol{{\tilde{m}}}_{3})+0.23312535kN(\boldsymbol{{\tilde{m}}}_{4})\\
				\qquad \quad+0.08500000kL(\boldsymbol{{\tilde{m}}}_{2})+0.68187464kL(\boldsymbol{{\tilde{m}}}_{3})+0.23312535kL(\boldsymbol{{\tilde{m}}}_{4}).
			\end{array}\right.
		\end{equation}

\section{Some preliminary inequalities}\label{S:inequalities}

In this section, some preliminary inequalities are reviewed, which will be useful in the error analysis presented in the next section. Proofs of the standard inverse inequality and discrete Gronwall inequality can be obtained in existing textbooks and references, see \cite{chen16, chenW20a, Ciarlet1978,Girault1986}; here the results are just cited.

\begin{definition} [$\ell^2$ inner product, ${{\left\| \cdot  \right\|}_{2}}$ norm]
	For grid functions ${{\boldsymbol f}_{h}}$ and ${{\boldsymbol g}_{h}}$ that take values on a uniform numerical grid, we define
	\begin{equation*}
		\left\langle\boldsymbol{f}_{h}, \boldsymbol{g}_{h}\right\rangle=h^{d} \sum_{{I} \in \Lambda_{d}} \boldsymbol{f}_{{I}} \cdot \boldsymbol{g}_{{I}} ,
	\end{equation*}
	where $\Lambda_{d}$ is the set of grid point, and ${I}$ is an index.
\end{definition}

In turn, the ${{\left\| \cdot  \right\|}_{2}}$ norm turns out to be 
\begin{equation*}
	\left\|\boldsymbol{f}_{h}\right\|_{2}=\left(\left\langle\boldsymbol{f}_{h}, \boldsymbol{f}_{h}\right\rangle\right)^{1 / 2}.
\end{equation*}
Furthermore, the discrete ${H^{1}}$ norm is introduced as
\begin{equation*}
	\left\|\boldsymbol{f}_{h}\right\|_{H^{1}}^{2}:=\left\|\boldsymbol{f}_{h}\right\|_{2}^{2}+\left\|\nabla_{h} \boldsymbol{f}_{h}\right\|_{2}^{2}.
\end{equation*}

\begin{definition}[ The discrete ${{\left\| \cdot  \right\|}_{\infty}}$ and ${{\left\| \cdot  \right\|}_{p}}$ norms]
	For grid functions ${{\boldsymbol f}_{h}}$ that take values on a uniform numerical grid, we define
	\begin{equation*}
		\left\|\boldsymbol f_{h}\right\|_{\infty}=\max _{{I} \in \Lambda_{d}}\left\|\boldsymbol{f}_{{I}}\right\|_{\infty}, \quad\left\|\boldsymbol{f}_{h}\right\|_{p}=\left(h^{d} \sum_{I \in \Lambda_{d}}\left|\boldsymbol{f}_{{I}}\right|^{p}\right)^{\frac{1}{p}},
		\quad 1 \leq p<+\infty .
	\end{equation*}
\end{definition}

\begin{lemma}[Summation by parts]\label{summation}	
	For any grid functions $\boldsymbol f_h$ and $\boldsymbol g_h$, with $\boldsymbol f_h$ satisfying the discrete boundary condition~\eqref{eq-2}, the following identity is valid:
	\begin{align} \label{sum1}
		\left\langle -\Delta_h \boldsymbol f_h,\boldsymbol g_h\right\rangle = \left\langle \nabla_h \boldsymbol f_h,\nabla_h \boldsymbol g_h\right\rangle .
	\end{align}
\end{lemma}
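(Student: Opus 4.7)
The plan is to prove the identity by performing a one-dimensional Abel-type summation in each coordinate direction separately and then summing the three contributions. Since the discrete Laplacian $\Delta_h$ decomposes as a sum $\Delta_h^{x}+\Delta_h^{y}+\Delta_h^{z}$ of second-order centered differences, and the discrete gradient $\nabla_h$ likewise splits into its three edge-centered difference components, it suffices to treat one direction (say the $x$-direction) and argue by symmetry for the other two.

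First, I would write out the left-hand side in the $x$-direction explicitly:
\[
  -h^{d}\sum_{i,j,k}\frac{\boldsymbol f_{i+1,j,k}-2\boldsymbol f_{i,j,k}+\boldsymbol f_{i-1,j,k}}{h^{2}}\cdot \boldsymbol g_{i,j,k},
\]
where the sum on $i$ runs over the interior indices $1\le i\le N$, and similarly for $j,k$. Next I would shift the index in the $\boldsymbol f_{i+1,j,k}$ and $\boldsymbol f_{i-1,j,k}$ terms to collect a telescoping expression of the form $-h^{d-1}\sum_{i}(\boldsymbol f_{i+1}-\boldsymbol f_{i})\cdot(\boldsymbol g_{i+1}-\boldsymbol g_{i})/h$ plus boundary contributions at $i=0,N$ (the interfaces involving the ghost points $i=-\tfrac{1}{2}$ and $i=N+\tfrac{1}{2}$). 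The ghost-point boundary condition \eqref{BC-1}, namely $\boldsymbol f_{0,j,k}=\boldsymbol f_{1,j,k}$ and $\boldsymbol f_{N,j,k}=\boldsymbol f_{N+1,j,k}$, makes each boundary term vanish identically because the one-sided difference $(\boldsymbol f_{1}-\boldsymbol f_{0})/h$ or $(\boldsymbol f_{N+1}-\boldsymbol f_{N})/h$ is zero. What remains is exactly the $x$-component of $\langle\nabla_{h}\boldsymbol f_{h},\nabla_{h}\boldsymbol g_{h}\rangle$, interpreted in the standard edge-centered sense consistent with the cell-centered grid in Figure~\ref{fig1}.

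Repeating the same manipulation in the $y$- and $z$-directions and summing yields the claimed identity. The mechanical part of the argument is merely bookkeeping of index shifts; the only nontrivial input is making sure that the definition of the discrete gradient $\nabla_h$ used on the right-hand side is the edge-centered forward-difference operator dual to the centered $\Delta_h$ on cells, so that the telescoping leaves no residual terms. The ``hard'' step, such as it is, will be confirming that the ghost-point reflection conditions precisely annihilate the boundary contributions for arbitrary $\boldsymbol g_h$, with no condition imposed on $\boldsymbol g_h$ itself; this is the reason the hypothesis is placed only on $\boldsymbol f_h$, and it follows directly from the fact that the one-sided difference of $\boldsymbol f_h$ across each boundary face vanishes.
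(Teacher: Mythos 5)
Your proof is correct and is the standard argument; note that the paper itself states this lemma without proof, merely citing textbook references for the preliminary results of Section~\ref{S:inequalities}. The directional splitting of $\Delta_h$, the index-shift (Abel) summation in each coordinate, and the observation that the ghost-point conditions $\boldsymbol f_{0,j,k}=\boldsymbol f_{1,j,k}$, $\boldsymbol f_{N,j,k}=\boldsymbol f_{N+1,j,k}$ annihilate the boundary terms $(\boldsymbol f_{1}-\boldsymbol f_{0})\cdot\boldsymbol g_{1}$ and $(\boldsymbol f_{N+1}-\boldsymbol f_{N})\cdot\boldsymbol g_{N}$ for arbitrary $\boldsymbol g_h$ --- so that the hypothesis is rightly placed on $\boldsymbol f_h$ alone --- is exactly the derivation the authors implicitly rely on.
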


\begin{lemma} [Inverse inequality] \cite{chen16, chenW20a, Ciarlet1978} \label{ccclemC1}.
	For each vector-valued grid function $\boldsymbol f_h\in X$, we have
	\begin{align}
		&
		\|{\boldsymbol f}_h \|_{\infty} \leq \gamma {h}^{-1/2 }
		( \|{\boldsymbol f}_h \|_2 + \| \nabla_h \boldsymbol f_h \|_2 ) ,   \label{inverse-1} 	
		\\
		& 	
		\| \boldsymbol f_h \|_q \leq \gamma {h}^{-(\frac32 - \frac{3}{q}) } \| \boldsymbol f_h \|_2 ,  \, \, \, \forall 2 < q \le + \infty ,
		\label{inverse-2}	
	\end{align}
	in which constant $\gamma$ depends on $\Omega$, as well as the form of the discrete $\| \cdot \|_2$ norm.
\end{lemma}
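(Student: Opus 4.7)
The plan is to reduce both inequalities to classical statements about the piecewise-multilinear finite element space by introducing a continuous interpolant $\tilde{\boldsymbol f}$ of the grid function $\boldsymbol f_h$. Specifically, I would define $\tilde{\boldsymbol f}$ to be the multilinear function on each cell that agrees with $\boldsymbol f_h$ at the cell-center nodes $I \in \Lambda_d$, extended across $\partial\Omega$ through the ghost-point condition~\eqref{BC-1}. A standard scaling argument from a fixed reference cube to a cube of side $h$ then yields the norm equivalences
\begin{equation*}
	c_1 \|\boldsymbol f_h\|_p \le \|\tilde{\boldsymbol f}\|_{L^p(\Omega)} \le c_2 \|\boldsymbol f_h\|_p, \quad 1 \le p \le \infty,
\end{equation*}
together with $\|\nabla \tilde{\boldsymbol f}\|_{L^2(\Omega)} \sim \|\nabla_h \boldsymbol f_h\|_2$, all constants independent of $h$.

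Once these equivalences are in place, both estimates reduce to standard inverse/Sobolev bounds on the finite element space. For \eqref{inverse-2}, I would invoke the classical inverse estimate on a single reference element: all norms on the finite-dimensional space of multilinears are equivalent there, and scaling to a cube of side $h$ yields the cell-wise bound $\|\tilde{\boldsymbol f}\|_{L^q(K)} \le C h^{3/q - 3/2} \|\tilde{\boldsymbol f}\|_{L^2(K)}$ in three dimensions. Summing over cells and invoking the embedding $\ell^2 \hookrightarrow \ell^q$ for $q \ge 2$ (which gives $\|(b_K)\|_{\ell^q} \le \|(b_K)\|_{\ell^2}$ applied to $b_K = \|\tilde{\boldsymbol f}\|_{L^2(K)}$) produces the global estimate $\|\tilde{\boldsymbol f}\|_{L^q(\Omega)} \le C h^{-(3/2 - 3/q)} \|\tilde{\boldsymbol f}\|_{L^2(\Omega)}$, which is exactly~\eqref{inverse-2} after transferring through the norm equivalences.

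For \eqref{inverse-1} the cleanest route is to combine the $(L^6, L^\infty)$ inverse estimate with the continuous Sobolev embedding $H^1(\Omega) \hookrightarrow L^6(\Omega)$, which is sharp in three dimensions. Specifically, the cell-wise argument above with $p = 6$, $q = \infty$, $d = 3$ gives $\|\tilde{\boldsymbol f}\|_{L^\infty(\Omega)} \le C h^{-1/2} \|\tilde{\boldsymbol f}\|_{L^6(\Omega)}$, and the continuous embedding then bounds $\|\tilde{\boldsymbol f}\|_{L^6(\Omega)}$ by $C\bigl(\|\tilde{\boldsymbol f}\|_{L^2(\Omega)} + \|\nabla \tilde{\boldsymbol f}\|_{L^2(\Omega)}\bigr)$. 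Transferring back via the norm equivalences of the first step produces~\eqref{inverse-1}.

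The main technical obstacle is the gradient equivalence $\|\nabla \tilde{\boldsymbol f}\|_{L^2(\Omega)} \sim \|\nabla_h \boldsymbol f_h\|_2$: the centered-difference stencil defining $\nabla_h$ must be related to the true gradient of the multilinear interpolant up to constants independent of $h$, and the ghost-point encoding of the homogeneous Neumann condition~\eqref{eq-2} must be accounted for so that boundary cells do not spoil the equivalence. These are routine but somewhat delicate bookkeeping issues; once handled, the rest of the proof is purely an application of classical finite-element inverse estimates and the three-dimensional continuous Sobolev embedding.
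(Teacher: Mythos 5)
The paper gives no proof of this lemma at all: it is stated as a cited result, with the surrounding text explicitly deferring to \cite{chen16, chenW20a, Ciarlet1978}, so there is no in-paper argument to compare against. Your reconstruction via a piecewise-multilinear interpolant is correct and is essentially the standard route taken in that literature: the cell-wise scaling estimate plus the $\ell^2\hookrightarrow\ell^q$ summation gives \eqref{inverse-2}, and the combination of the $(L^6,L^\infty)$ inverse estimate with the sharp three-dimensional embedding $H^1(\Omega)\hookrightarrow L^6(\Omega)$ gives \eqref{inverse-1}; the $h^{-1/2}$ factor cannot be obtained from the $\ell^2$ norm alone, so some Sobolev input is genuinely required there. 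Two remarks. First, \eqref{inverse-2} admits a much more elementary direct proof that bypasses the interpolant entirely: from $h^3|\boldsymbol f_I|^2\le\|\boldsymbol f_h\|_2^2$ one gets $\|\boldsymbol f_h\|_\infty\le h^{-3/2}\|\boldsymbol f_h\|_2$, and then $\|\boldsymbol f_h\|_q^q\le\|\boldsymbol f_h\|_\infty^{q-2}\|\boldsymbol f_h\|_2^2$ interpolates to the stated exponent; this avoids all the bookkeeping of your first step and is worth knowing. Second, for \eqref{inverse-1} you only need the one-sided bound $\|\nabla\tilde{\boldsymbol f}\|_{L^2(\Omega)}\le C\|\nabla_h\boldsymbol f_h\|_2$ rather than a full equivalence, together with $\|\tilde{\boldsymbol f}\|_{L^\infty(\Omega)}\ge\|\boldsymbol f_h\|_\infty$ (which holds exactly, since a multilinear function attains its extrema at cell vertices); stating it this way trims the ``delicate bookkeeping'' you flag to the genuinely needed direction, and the ghost-point extension \eqref{BC-1} only makes the interpolant constant in the normal direction on boundary half-cells, which cannot spoil either bound.
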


\begin{lemma} [Discrete Gronwall inequality] \cite{Girault1986}\label{lem: Gronwall-1}.
	Let ${{\left\{ \left. {{\alpha }_{j}} \right\} \right.}_{j\ge 0}}, {{\left\{ \left. {{\beta }_{j}} \right\} \right.}_{j\ge 0}}$, and ${{\left\{ \left. {{\omega }_{j}} \right\} \right.}_{j\ge 0}}$ be sequences of real numbers such that
	and  we have the following discrete estimate:
	\begin{align}
		{{\alpha }_{j}}\le {{\alpha }_{j+1}},{{\beta }_{j}}\ge 0 \label{Gronwall-1}~~~~ and~~~~ {{\omega }_{j}}\le {{\alpha }_{j}}+\sum\limits_{i=0}^{j-1}{{{\beta }_{i}}{{\omega }_{i}}},\forall j\ge 0.
	\end{align}
	Then it holds that
	\begin{align}
		{{\omega }_{j}}\le {{\alpha }_{j}}\exp \left\{ \left. \sum\limits_{i=0}^{j-1}{{{\beta }_{i}}} \right\} \right.,\forall j\ge 0.
	\end{align}
\end{lemma}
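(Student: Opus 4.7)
The plan is to establish $\omega_j \le \alpha_j \exp\bigl(\sum_{i=0}^{j-1}\beta_i\bigr)$ by strong induction on $j$, leveraging both hypotheses in \eqref{Gronwall-1}: the monotonicity $\alpha_j \le \alpha_{j+1}$ and the nonnegativity $\beta_i \ge 0$. Since this is a classical result cited from \cite{Girault1986}, no new machinery is needed, and the argument should follow the standard inductive/telescoping pattern.

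For the base case $j=0$, the hypothesis reads $\omega_0 \le \alpha_0$ (empty sum), while the target bound gives $\alpha_0 \exp(0) = \alpha_0$, so the inequality is immediate. For the inductive step, set $S_i := \sum_{k=0}^{i-1}\beta_k$, assume $\omega_i \le \alpha_i e^{S_i}$ for every $i<j$, and substitute into the recurrence to obtain
\begin{equation*}
\omega_j \le \alpha_j + \sum_{i=0}^{j-1}\beta_i\alpha_i e^{S_i} \le \alpha_j\Bigl(1 + \sum_{i=0}^{j-1}\beta_i e^{S_i}\Bigr),
\end{equation*}
where the monotonicity $\alpha_i \le \alpha_j$ for $i \le j$ has been used to pull the common factor out of the sum. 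This step is exactly where the hypothesis $\alpha_j \le \alpha_{j+1}$ is essential; without it, the simple factorization would fail.

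It then remains to verify the purely scalar inequality $1 + \sum_{i=0}^{j-1}\beta_i e^{S_i} \le e^{S_j}$. The cleanest route is a telescoping estimate: because $\beta_i \ge 0$, the elementary bound $e^x - 1 \ge x$ applied with $x = \beta_i$ yields
\begin{equation*}
e^{S_{i+1}} - e^{S_i} = e^{S_i}\bigl(e^{\beta_i}-1\bigr) \ge \beta_i e^{S_i}.
\end{equation*}
Summing $i$ from $0$ to $j-1$ collapses the left-hand side to $e^{S_j} - e^{S_0} = e^{S_j} - 1$, which added to $1$ gives precisely $e^{S_j}$ and closes the induction. I do not anticipate any genuine obstacle: the only places that demand care are the empty-sum convention $S_0 = 0$ (needed for the base case to match the general formula) and the single use of $\alpha_i \le \alpha_j$ that activates the monotonicity hypothesis.
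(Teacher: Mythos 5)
The paper does not actually prove this lemma --- Section~\ref{S:inequalities} states that the result is ``just cited'' from \cite{Girault1986} --- so your proposal supplies a proof where the paper offers none. Your argument is the standard strong-induction/telescoping proof, and every step is correct except the final one, where a sign hypothesis is used silently. From
\begin{equation*}
\omega_j \le \alpha_j\Bigl(1 + \sum_{i=0}^{j-1}\beta_i e^{S_i}\Bigr)
\qquad\text{and}\qquad
1 + \sum_{i=0}^{j-1}\beta_i e^{S_i} \le e^{S_j}
\end{equation*}
you may conclude $\omega_j \le \alpha_j e^{S_j}$ only if $\alpha_j \ge 0$; multiplying by a negative $\alpha_j$ reverses the second inequality. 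This is not pedantry: with the hypotheses exactly as printed (arbitrary real sequences, $\alpha_j\le\alpha_{j+1}$, $\beta_j\ge 0$) the conclusion is false. Take $\alpha_0=\alpha_1=-1$, $\beta_0=1$, $\omega_0=-1$, $\omega_1=-2$: the recurrence holds, yet $\omega_1=-2 > -e = \alpha_1 e^{\beta_0}$. The statement in the paper is visibly truncated (``\ldots such that and we have\ldots''), and the version in \cite{Girault1986} carries the nonnegativity assumption your last step needs; note that $\alpha_0\ge 0$ together with the stated monotonicity already forces $\alpha_j\ge 0$ for all $j$, and in every application in this paper $\alpha_j$ is a sum of squared norms. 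So your route is the right (and classical) one, but you should state explicitly that $\alpha_j\ge 0$ is invoked in the final multiplication, since it is not among the hypotheses as literally written.
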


\section{Convergence analysis for the proposed IMEX-RK3 scheme} \label{sec: convergence}

A theoretical analysis of any third order accurate, IMEX-RK scheme is very challenging, which comes from its multi-stage nature and highly complex nonlinear terms in the vector form. For the sake of simplicity, we will focus on the IMEX-RK3 scheme~\eqref{IMEX-RK3}. Note that a simplified nonlinear LL equation is considered, and only the damping term is considered. The convergence analysis of the proposed method is provided in this section.

\subsection{Convergence analysis of the IMEX-RK3 scheme~\eqref{IMEX-RK3} for the nonlinear LL equation}
Taking a simplified nonlinear LL equation~\eqref{eq-1} into consideration in this part, in which the gyromagnetic term is skipped:
\begin{align}
	{{\boldsymbol m}_{t}}= - \alpha \boldsymbol m \times ( \boldsymbol m \times ( \epsilon \Delta \boldsymbol m + \boldsymbol f ) ).
	\label{equation-LL-mod}
\end{align}
For any vector function $\boldsymbol m$ with $| \boldsymbol m | \equiv 1$, the nonlinear term $N (\boldsymbol m)$ could always be rewritten as follows, with a notation of $\beta = \alpha \epsilon$: 
\begin{equation}
	\begin{aligned}
		N (\boldsymbol m) = & - \alpha \boldsymbol m \times ( \boldsymbol m \times ( \epsilon \Delta \boldsymbol m + \boldsymbol f ) ) - \beta \Delta \boldsymbol m
		\\
		= & \beta ( \Delta \boldsymbol m + | \nabla \boldsymbol m |^2 \boldsymbol m ) - \alpha \boldsymbol m \times ( \boldsymbol m \times \boldsymbol f )
		- \beta \Delta \boldsymbol m
		\\
		= &
		\beta | \nabla \boldsymbol m |^2 \boldsymbol m  - \alpha \boldsymbol m \times ( \boldsymbol m \times \boldsymbol f ).
	\end{aligned}
	\label{LL-reformulation-2}
\end{equation}
On the other hand, the notation ${ A}_h\nabla_h$ stands for the second approximation to the gradient operator. In fact, it is an average gradient operator defined for the gird function $\boldsymbol m=(u_h, v_h, w_h)^T\in \boldsymbol X$, as ${A}_h\nabla_h\boldsymbol m_h=\nabla_h{A}_h\boldsymbol m_h$ and ${A}_h \boldsymbol m =({A}_xu_h,{A}_y v_h,{A}_z w_h)$:
\begin{equation*}
	{A}_x u_{i,j,\ell}=\frac{u_{i,j,\ell}+u_{i-1,j,\ell}}{2},\,
    {A}_y v_{i,j,\ell}=\frac{v_{i,j,\ell}+v_{i,j-1,\ell}}{2},\,
	{A}_z w_{i,j,\ell}=\frac{w_{i,j,\ell}+w_{i,j,\ell-1}}{2}.
\end{equation*}
Accordingly, the discrete form of the nonlinear term is represented as 
\begin{equation}
	N_h (\boldsymbol m) = \beta | { A}_h \nabla_h \boldsymbol m |^2 \boldsymbol m - \alpha \boldsymbol m \times ( \boldsymbol m \times \boldsymbol f ).
	\label{NL-discrete-1}
\end{equation}

For the sake of convenience, all coefficients are kept with four significant figures, which will not affect the convergence analysis. The actual numerical tests are kept with the original eight significant figures. In turn, the IMEX-RK3 scheme could be expressed as follows
	\begin{equation}\label{IMEX-RK3-NL}
		\left\{\begin{array}{l}
			\boldsymbol{\tilde{m}}_{1}=  \boldsymbol{{m}}_{n}, \\ {\boldsymbol{\tilde{m}}_{2}}=  \boldsymbol{{\tilde{m}}}_{1}+0.6250kN_h(\boldsymbol{{\tilde{m}}}_{1})
			+0.6250kL_h(\boldsymbol{{\tilde{m}}}_{2}), \\ {\boldsymbol{\tilde{m}}_{3}}=\boldsymbol{{\tilde{m}}}_{1}+0.1706kN_h(\boldsymbol{{\tilde{m}}}_{1})
			+0.1429kN_h(\boldsymbol{{\tilde{m}}}_{2}) -0.2359kL_h(\boldsymbol{{\tilde{m}}}_{2})+0.5494kL_h(\boldsymbol{{\tilde{m}}}_{3}),\\
			{\boldsymbol{\tilde{m}}_{4}}=  \boldsymbol{{\tilde{m}}}_{1}+0.4500kN_h(\boldsymbol{{\tilde{m}}}_{2})
			+0.5500kN_h(\boldsymbol{{\tilde{m}}}_{3})\\
			\quad \quad  \quad +0.0850kL_h(\boldsymbol{{\tilde{m}}}_{2})+0.6819kL_h(\boldsymbol{{\tilde{m}}}_{3})+0.2331kL_h(\boldsymbol{{\tilde{m}}}_{4}),\\
			\boldsymbol{{m}}_{n+1}=\boldsymbol{{\tilde{m}}}_{1}+0.0850kN_h(\boldsymbol{{\tilde{m}}}_{2})+0.6819kN_h(\boldsymbol{{\tilde{m}}}_{3})+0.2331kN_h(\boldsymbol{{\tilde{m}}}_{4})\\
			\quad  \quad \quad
			+0.0850kL_h(\boldsymbol{{\tilde{m}}}_{2})+0.6819kL_h(\boldsymbol{{\tilde{m}}}_{3})+0.2331kL_h(\boldsymbol{{\tilde{m}}}_{4}).
		\end{array}\right.
\end{equation}
Of course, this numerical system could be equivalently rewritten in the following form, to facilitate the Runge-Kutta analysis:

	\begin{small}
		\begin{align}
			&
			\frac{\tilde{\boldsymbol m}_2 - \boldsymbol m_n}{k} = 0.6250 N_h ( \tilde{\boldsymbol m}_1 ) + 0.6250 \beta \Delta_h \tilde{\boldsymbol m}_2 ,   \label{IMEX-RK3-NL-2-1}
			\\
			&
			\frac{\tilde{\boldsymbol m}_3 - \tilde{\boldsymbol m}_2}{k} = -0.4544 N_h ( \tilde{\boldsymbol m}_1 )+0.1429 N_h ( \tilde{\boldsymbol m}_2 )
			-0.8609 \beta \Delta_h  \tilde{\boldsymbol m}_2  + 0.5494 \beta \Delta_h  \tilde{\boldsymbol m}_3 ,   \label{IMEX-RK3-NL-2-2}
			\\
			&
			\frac{\tilde{\boldsymbol m}_4 - \tilde{\boldsymbol m}_3}{k} = -0.1706 N_h ( \tilde{\boldsymbol m}_1 )+0.3071 N_h ( \tilde{\boldsymbol m}_2 )+ 0.5500 N_h ( \tilde{\boldsymbol m}_3 )\nonumber
			\\
			&
			\qquad \qquad  \quad + 0.3209 \beta \Delta_h  \tilde{\boldsymbol m}_2  + 0.1325 \beta \Delta_h \tilde{\boldsymbol m}_3 + 0.2331 \beta \Delta_h  \tilde{\boldsymbol m}_4 ,   \label{IMEX-RK3-NL-2-3}
			\\
			& \frac{\boldsymbol m_{n+1} - \tilde{\boldsymbol m}_4}{k} = -0.3650 N_h ( \tilde{\boldsymbol m}_2 )+0.1319 N_h ( \tilde{\boldsymbol m}_3 )+ 0.2331 N_h ( \tilde{\boldsymbol m}_4 ) . \label{IMEX-RK3-NL-2-4}
		\end{align}
\end{small}

The main theoretical result of the convergence analysis is stated below.
\begin{theorem} \label{thm: convergence}
	Assume the exact solution $\boldsymbol \Phi$ of \eqref{equation-LL-mod} satisfies the regularity assumption: 
	Denote ${\boldsymbol m}^n$ ($n\ge0$) as the numerical solution obtained from~\eqref{IMEX-RK3-NL}, or equivalently \eqref{IMEX-RK3-NL-2-1}--\eqref{IMEX-RK3-NL-2-4}, with the initial  error satisfying $\|{P}_h \Phi (\cdot,t_0) - \boldsymbol m_0 \|_2 + (k\|\nabla_h ( {P}_{h} \Phi(\cdot,t_0) - \boldsymbol m_0 ) \|_2)^\frac12	 = {O} (h^4)$. Additionally, a linear refinement assumption that $C_1 h \le k \le C_2 h$ is made~(with $C_1,C_2$ being two positive constants) and the condition $ k \le C^{'} h$ is also made to ensure the convergence~(${C^{'}}$ is independent of $k$ and $h$, only depends on $\tilde{M}$). Then the following convergence result holds for $1 \le n\le \left\lfloor\frac{T}{k}\right\rfloor$ as $k, h$ go to zero:	
	\begin{align} \label{convergence-0}
		\| \Phi (\cdot,t_n) - \boldsymbol m^n \|_2 + (k\| \nabla_h( \Phi (\cdot,t_n) - \boldsymbol m^n) \|_2)^\frac12	&\leq {C} ( k^3+h^2) ,
	\end{align}	
	in which the constant ${C}>0$ is independent of $k$ and $h$.
\end{theorem}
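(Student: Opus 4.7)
\medskip
\noindent\textbf{Proof proposal.}
The plan is to set up stage-by-stage error equations, mimic the stability argument of Section~\ref{Section:IMEX RK3 scheme} on the \emph{errors} rather than the solution, and close the nonlinear terms via an induction that maintains a uniform $W^{1,\infty}_h$ bound on the numerical solution. I begin by introducing a higher-order consistent projection or interpolation $P_h\Phi$ of the exact solution (built so that $P_h\Phi$ inherits the homogeneous Neumann boundary condition and $\Delta_h P_h\Phi = P_h\Delta\Phi + O(h^2)$; the average-gradient operator $\mathcal{A}_h\nabla_h$ reproduces $\nabla\Phi$ up to $O(h^2)$). Substituting $P_h\Phi$ into each line of \eqref{IMEX-RK3-NL-2-1}--\eqref{IMEX-RK3-NL-2-4} produces truncation errors $\tau_j^n$ ($j=2,3,4$) and a final-step truncation $\tau_\star^n$. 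A Taylor expansion in time (using the third-order conditions \eqref{eq-15}--\eqref{eq-16}, which are satisfied by the chosen Butcher tableau) gives $\|\tau_j^n\|_2 \le C(k^3+h^2)$; the spatial $O(h^2)$ piece comes from the standard consistency of $\Delta_h$, $\mathcal{A}_h\nabla_h$, and from $|P_h\Phi|=1+O(h^2)$.

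Next, I define the stage errors $\boldsymbol e_j^n := P_h\Phi(\cdot,t_n^{j-1})-\tilde{\boldsymbol m}_j$ and subtract the consistent identities from \eqref{IMEX-RK3-NL-2-1}--\eqref{IMEX-RK3-NL-2-4}. The resulting equations have exactly the linear left-hand sides of \eqref{IMEX-RK3-Linear-2-2}--\eqref{IMEX-RK3-Linear-2-4} (with $\boldsymbol{\tilde m}_j$ replaced by $\boldsymbol e_j^n$), plus nonlinear difference terms $\mathcal{N}_j := N_h(P_h\Phi)-N_h(\tilde{\boldsymbol m}_j)$ and truncation forcings. Taking the discrete inner product with $2\boldsymbol e_j^n$ at each stage and summing, exactly as in \eqref{stability-1}--\eqref{stability-3}, gives the key energy identity on the errors. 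The stability condition \eqref{IMEX-RK3-stability-2}, which the chosen coefficients satisfy, absorbs the cross terms $\langle\nabla_h\boldsymbol e_i^n,\nabla_h\boldsymbol e_j^n\rangle$ via Cauchy--Schwarz, leaving strictly positive diffusion on $\|\nabla_h\boldsymbol e_j^n\|_2^2$ for $j=2,3,4$, plus the finite-step difference squares.

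The nonlinear terms are then estimated by the splitting
\begin{equation*}
N_h(P_h\Phi)-N_h(\boldsymbol v) = \beta\bigl(|\mathcal{A}_h\nabla_h P_h\Phi|^2 - |\mathcal{A}_h\nabla_h\boldsymbol v|^2\bigr)P_h\Phi + \beta|\mathcal{A}_h\nabla_h\boldsymbol v|^2(P_h\Phi-\boldsymbol v) - \alpha\bigl(P_h\Phi-\boldsymbol v\bigr)\times(\cdots) - \alpha\boldsymbol v\times\bigl((P_h\Phi-\boldsymbol v)\times\boldsymbol f\bigr),
\end{equation*}
so that $\|\mathcal{N}_j\|_2 \le C\bigl(\|\boldsymbol e_j\|_2 + \|\nabla_h\boldsymbol e_j\|_2\bigr)$ provided we have a uniform a~priori bound $\|\tilde{\boldsymbol m}_j\|_\infty + \|\mathcal{A}_h\nabla_h\tilde{\boldsymbol m}_j\|_\infty \le \tilde M$. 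Young's inequality then hides $\varepsilon\|\nabla_h\boldsymbol e_j\|_2^2$ into the positive diffusion terms provided by stability, and the remaining $C\|\boldsymbol e_j\|_2^2$ contributes to the Gronwall loop. Summing over $n$ and applying Lemma~\ref{lem: Gronwall-1} with $\alpha_n=C(k^3+h^2)^2$ yields \eqref{convergence-0}.

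The main obstacle is closing the a~priori bound that justifies the Lipschitz estimate of $\mathcal{N}_j$, since $N_h$ contains the genuinely quadratic-in-gradient term $\beta|\mathcal{A}_h\nabla_h\boldsymbol m|^2\boldsymbol m$. I handle this by induction on $n$: assuming the error bound \eqref{convergence-0} at step $n$, the linear refinement $k\le C_2 h$ and the inverse inequality \eqref{inverse-1}--\eqref{inverse-2} convert an $\ell^2$--$H^1_h$ error of size $O(k^3+h^2)$ into an $\ell^\infty$--$W^{1,\infty}_h$ error of size $O(h^{3/2})$, which is $o(1)$. Combined with the $W^{1,\infty}$ regularity of $\Phi$, this yields $\|\boldsymbol m^n\|_\infty + \|\mathcal{A}_h\nabla_h\boldsymbol m^n\|_\infty \le \tilde M$ with $\tilde M$ strictly larger than the continuous bound. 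Propagating this bound through the three intermediate stages $\tilde{\boldsymbol m}_2,\tilde{\boldsymbol m}_3,\tilde{\boldsymbol m}_4$ requires an auxiliary small-step condition $k\le C'h$ with $C'$ depending on $\tilde M$, because each stage equation must be inverted and controlled before the next is formed. Once that bootstrap is in place, the energy estimate plus Gronwall closes the induction and delivers the stated $O(k^3+h^2)$ convergence rate uniformly in the damping parameter~$\alpha$.
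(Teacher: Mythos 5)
There is a genuine gap, and it is precisely the one the paper's construction is designed to avoid. You compare the numerical solution to a standard second-order projection $P_h\Phi$, so your consistency error is $O(k^3+h^2)$, and you then claim that the inverse inequality plus linear refinement converts the resulting $\ell^2$--$H^1_h$ error into an $o(1)$ bound in $W^{1,\infty}_h$. In 3D this does not close: the energy estimate controls the gradient error only after paying a factor $k^{-1/2}$ (it is an $\ell^2(0,T;H^1_h)$-type bound), so the best you get pointwise in time is $\| \nabla_h \boldsymbol e^n \|_2 \lesssim k^{-1/2}(k^3+h^2)\sim h^{3/2}$ under $k\sim h$, and then \eqref{inverse-2} gives $\| \nabla_h \boldsymbol e^n \|_\infty \lesssim h^{-3/2}\cdot h^{3/2}=O(1)$, not $o(1)$. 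Your stated "$O(h^{3/2})$ in $W^{1,\infty}_h$" would require $\|\nabla_h\boldsymbol e^n\|_2=O(h^3)$, which $O(h^2)$ consistency cannot deliver. This is exactly why the paper constructs the corrected profile $\underline{\Phi}=\Phi+h^2\Phi^{(1)}$ with $O(k^3+h^4)$ consistency via \eqref{consistency-2-2}--\eqref{consistency-2-4}: the error against $\underline{\Phi}$ is then small enough (a-priori assumption \eqref{bound-2}) that the inverse inequality yields the $\|\cdot\|_\infty\le\frac18$ and $\|\nabla_h\cdot\|_\infty\le1$ bounds at every stage, and the advertised $O(k^3+h^2)$ rate is recovered at the very end through $\Phi^n-\boldsymbol m^n=\underline{\boldsymbol e}^n-h^2\Phi^{(1)}$. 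Without this correction term your bootstrap on $\|\nabla_h\tilde{\boldsymbol m}_j\|_\infty$ fails, and with it the Lipschitz estimate for the quadratic-in-gradient term $\beta|A_h\nabla_h\boldsymbol m|^2\boldsymbol m$.

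A second, independent problem is your definition of the stage errors as $\boldsymbol e^n_j:=P_h\Phi(\cdot,t_n^{j-1})-\tilde{\boldsymbol m}_j$ together with the claim $\|\tau^n_j\|_2\le C(k^3+h^2)$ for each stage. Individual Runge--Kutta stages are not third-order approximations of the exact solution at the intermediate times $t_n^{j-1}$; inserting $\Phi(\cdot,t_n^{j-1})$ into the stage equations produces defects of low order in $k$, and third-order accuracy only emerges after the final combination of stages. The paper sidesteps this by \emph{defining} the comparison profiles $\tilde{\Phi}^{n,(j)}$ through the same RK recursion \eqref{consistency-1-1}--\eqref{consistency-1-3} applied to $\underline{\Phi}^n$, so that the intermediate stage equations \eqref{consistency-3-1}--\eqref{consistency-3-3} hold exactly and the single truncation term $\tau^n$ with $\|\tau^n\|_2\le C(k^3+h^4)$ appears only in the final update \eqref{consistency-3-4}. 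Your energy framework (stage-by-stage testing against $2\boldsymbol e_j^n$, absorbing cross gradient terms via the stability structure, Lipschitz bound on the nonlinear differences, Gronwall) is the right skeleton and matches the paper's, but without these two devices the argument as written does not produce the stated rate.
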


First, we construct an approximate solution $\underline{\Phi} = \Phi + {{h}^{2}}{{\Phi}^{(1)}}$, so that an ${O} (h^4)$ spatial truncation error is obtained. Such a higher order consistency is important in the later analysis to bound the $\| \cdot \|_{W_h^{1,\infty}}$ norm of the numerical solution. It is noticed that $\Phi^{(1)}$ is a spatially continuous function, and its construction will be obtained using a perturbation expansion, which depends solely on the exact solution $\Phi$. Moreover, a higher $O (k^3 + h^4)$ consistency has to be satisfied with the given numerical scheme~\eqref{IMEX-RK3-NL-2-1}--\eqref{IMEX-RK3-NL-2-4}.

In more details, we introduce a higher order approximate expansion of the exact solution, since the second order spatial accuracy, associated with the centered difference approximation, is not able to control the discrete $\| \cdot \|_{W_h^{1,\infty}}$ norm of the numerical solution, which is needed in the later convergence analysis. In turn, instead of substituting the exact solution into the numerical algorithm, a careful construction of an approximate profile is performed by adding an ${O} (h^2)$ correction term to the exact solution to satisfy an ${O} (h^4)$ truncation error. Afterwards, we analyze the numerical error function between the constructed profile and the numerical solution, instead of a direct comparison between the numerical solution and exact solution. Such an improved consistency will lead to a higher order convergence estimate in the $\ell^\infty (0,T; \ell^2) \cap \ell^2 (0, T; H_h^1)$ norm, which in turn yields a desired $\| \cdot \|_{W_h^{1,\infty}}$ bound of the numerical solution, with the help of the inverse inequality. Similar techniques has been reported for a wide class of nonlinear PDEs; see the related works for the incompressible fluid equation~\cite{E95, E02, STWW2003, STWW2007, Wang2000, Wang2002, Wang2004}, various gradient equations~\cite{baskaran13b, guan17a, guan14a, LiX21a, LiX23a, LiX24a}, the porous medium equation based on the energetic variational approach~\cite{duan20a, duan22a, duan22b}, nonlinear wave equation~\cite{WangL15}, etc.

An application of the centered finite difference discretization to the exact solution $\Phi$ gives
\begin{equation} \label{consistency-2-1}
	\Phi_{t}= \beta | { A}_h \nabla_h \Phi |^2 \Phi - \alpha \Phi \times ( \Phi \times \boldsymbol f )
	+ \beta \Delta_h \Phi + h^2 \boldsymbol g^{(2)} +{O} (h^4)   ,
\end{equation}
which comes from the Taylor expansion in space. In more details, the function $\boldsymbol g^{(2)}$ is smooth enough and only depends on the higher order derivatives of $\Phi$. Subsequently, the spatial correction function $\Phi^{(1)}$ is given by the solution of the following linear differential equation
\begin{equation}
	\begin{aligned}
		\partial_t \Phi^{(1)} = & \beta \Delta \Phi^{(1)}
		+ \beta \Big( |  \nabla \Phi |^2 \Phi^{(1)}
		+  2 ( \nabla \Phi \cdot \nabla \Phi^{(1)} ) \Phi \Big)
		\\
		&
		- \alpha \Big( \Phi \times \Phi^{(1)} + \Phi^{(1)} \times ( \Phi + \boldsymbol f ) \Big)
		- \boldsymbol g^{(2)}  ,  \quad
		\Phi^{(1)} ( \cdot , \, t=0) \equiv 0
	\end{aligned}
	\label{consistency-2-2}
\end{equation}
with homogeneous Neumann boundary condition. In fact, \eqref{consistency-2-2} is a linear parabolic PDE, and the existence and uniqueness of its solution could be derived by making use of a standard Galerkin procedure and Sobolev estimates, following the classical techniques for time-dependent parabolic equation~\cite{temam01}.	Moreover, the solution of~\eqref{consistency-2-2} depends only on the exact profile $\Phi$ and is smooth enough. Similar to~\eqref{consistency-2-1}, an application of the finite difference discretization to $\Phi^{(1)}$ gives
\begin{equation}
	\begin{aligned}
		\partial_t \Phi^{(1)} = &
		\beta \Big( | { A}_h \nabla_h \Phi |^2 \Phi^{(1)}
		+  2 ( { A}_h \nabla_h \Phi \cdot { A}_h \nabla_h \Phi^{(1)} ) \Phi \Big)
		\\
		&
		- \alpha \Big( \Phi \times \Phi^{(1)} + \Phi^{(1)} \times ( \Phi + \boldsymbol f ) \Big)
		+ \beta \Delta_h \Phi^{(1)}  - \boldsymbol g^{(2)}
		+ {O} (h^2) .
	\end{aligned}
	\label{consistency-2-3}
\end{equation}
In turn, a combination of~\eqref{consistency-2-1} and \eqref{consistency-2-3} leads to the following higher order consistency estimate for $\underline{\Phi} = \Phi + h^2 \Phi^{(1)}$:
\begin{equation} \label{consistency-2-4}
	\underline{ \Phi}_{t}= \beta | { A}_h \nabla_h \underline{\Phi} |^2 \underline{\Phi}
	- \alpha \underline{\Phi} \times ( \underline{\Phi} \times \boldsymbol f )
	+ \beta \Delta_h \underline{\Phi} + {O} (h^4)   .
\end{equation}

Moreover, we extend the approximate profile $\underline{\Phi}$ to the numerical ghost points, according to the extrapolation formula:
\begin{align}\label{extrapolation}
	\underline{\Phi}_{i,j,0}=\underline{\Phi}_{i,j,1}, \quad \underline{\Phi}_{i, j, N_{z+1}}=\underline{\Phi}_{i,j,N_z},
\end{align}
and the extrapolation for other boundaries can be formulated in the same manner. In addition, we are able to prove that such an extrapolation yields a higher order ${O}\left(h^5\right)$ approximation, due to the fact that
\begin{equation*}
	\partial_z^3 \Phi = 0 , \quad \partial_z \Phi^{(1)} = 0 ,  \quad
	\mbox{at} \quad z = 0, 1 .
\end{equation*}

Given the exact solution $\Phi$, we denote $\underline{\Phi}^n = \underline{\Phi} (\cdot , t^n)$. In addition, another three intermediate approximate solutions need to be constructed at each time step, to facilitate the Runge-Kutta analysis, following the same algorithm as in~\eqref{IMEX-RK3-NL}:

	\begin{align}
		&  \tilde{\Phi}^{n,(2)} = \underline{\Phi}^n + 0.6250k N_h (\underline{\Phi}^{n}) + 0.6250k\beta \Delta_h \tilde{\Phi}^{n,(2)} ,   \label{consistency-1-1}
		\\
		&  \tilde{\Phi}^{n,(3)} = \underline{\Phi}^n +0.1706k N_h ( \underline{\Phi}^{n})  +0.1429k N_h ( \tilde{\Phi}^{n,(2)} )\nonumber
		\\
		&
		\qquad \quad-0.2359k \beta \Delta_h \tilde{\Phi}^{n,(2)} + 0.5494k \beta \Delta_h \tilde{\Phi}^{n,(3)}  ,   \label{consistency-1-2}
		\\
		&  \tilde{\Phi}^{n,(4)} = \underline{\Phi}^n +0.4500k N_h ( \tilde{\Phi}^{n,(2)} ) +0.5500k   N_h ( \tilde{\Phi}^{n,(3)} ) \nonumber
		\\
		&
		\qquad \quad+0.0850k \beta \Delta_h  \tilde{\Phi}^{n,(2)} +0.6819k \beta \Delta_h  \tilde{\Phi}^{n,(3)} +0.2331 k\beta \Delta_h  \tilde{\Phi}^{n,(4)} ,   \label{consistency-1-3}
\end{align}
in which the homogeneous discrete Neumann boundary condition (similar to~\eqref{extrapolation}) is imposed for $\tilde{\Phi}^{n,(j)}$, $j = 2, 3, 4$. Moreover, the careful Taylor expansion (related to the IMEX-RK3 method) reveals the following consistency estimate of the constructed solution at the next time step with $\| \tau^{n} \|_2 \le { C} (k^3 + h^4)$.

	\begin{align}
		\underline{\Phi}^{n+1} =~&\underline{\Phi}^n + 0.0850k N_h ( \tilde{\Phi}^{n,(2)} ) + 0.6819k N_h ( \tilde{\Phi}^{n,(3)} )  +0.2331 k N_h ( \tilde{\Phi}^{n,(4)} ) \nonumber
		\\
		&
		+0.0850  k\beta \Delta_h  \tilde{\Phi}^{n,(2)}   + 0.6819  k\beta \Delta_h  \tilde{\Phi}^{n,(3)}
		+0.2331   k\beta \Delta_h  \tilde{\Phi}^{n,(4)}  + k\tau^{n}. \label{consistency-2}
\end{align}

Clearly, by  performing a similar transformation as in~\eqref{IMEX-RK3-NL-2-1}--\eqref{IMEX-RK3-NL-2-4}, the constructed profiles $\underline{\Phi}^n$, $\underline{\Phi}^{n+1}$ and $\tilde{\Phi}^{n,(j)}$ ($j = 2, 3, 4$) satisfy the numerical system:

	\begin{small}
		\begin{align}
			&\frac{\tilde{\Phi}^{n,(2)} - \underline{\Phi}^n}{k} =0.6250 N_h ( \underline{\Phi}^{n})  + 0.6250 \beta \Delta_h \tilde{\Phi}^{n,(2)} ,   \label{consistency-3-1}
			\\
			& \frac{\tilde{\Phi}^{n,(3)} - \tilde{\Phi}^{n,(2)}}{k}  = -0.4544 N_h ( \underline{\Phi}^{n})  + 0.1429 N_h ( \tilde{\Phi}^{n,(2)} ) -0.8609 \beta \Delta_h \tilde{\Phi}^{n,(2)} + 0.5494 \beta \Delta_h \tilde{\Phi}^{n,(3)}  ,   \label{consistency-3-2}
			\\
			&\frac{\tilde{\Phi}^{n,(4)} - \tilde{\Phi}^{n,(3)}}{k}  = -0.1706 N_h ( \underline{\Phi}^{n} ) +0.3071 N_h ( \tilde{\Phi}^{n,(2)} ) + 0.5500 N_h ( \tilde{\Phi}^{n,(3)} )  \nonumber
			\\
			&\qquad   \qquad \qquad +0.3209 \beta \Delta_h \tilde{\Phi}^{n,(2)} + 0.1325 \beta \Delta_h \tilde{\Phi}^{n,(3)} + 0.2331 \beta \Delta_h \tilde{\Phi}^{n,(4)} ,   \label{consistency-3-3}
			\\
			&\frac{\underline{\Phi}^{n+1} - \tilde{\Phi}^{n,(4)}}{k} = -0.3650 N_h (\tilde{\Phi}^{n,(2)}) +0.1319 N_h (\tilde{\Phi}^{n,(3)}) +0.2331 N_h (\tilde{\Phi}^{n,(4)})+\tau ^n . \label{consistency-3-4}
		\end{align}
\end{small}

Since the constructed profiles $\tilde{\Phi}^{n,(j)}$ ($j = 2, 3, 4$) only rely on the approximate solution $\underline{\Phi}^n$, the consistency estimate reveals that
\begin{equation}
	\begin{aligned}
		\|\underline{\Phi}^{n} \|_\infty, \| \tilde{\Phi}^{n,(j)} \|_\infty &\le \frac98 , \\
		\| \nabla_h \underline{\Phi}^{n} \|_\infty, \| \nabla_h \tilde{\Phi}^{n,(j)} \|_\infty
		&\le { C}^* ,
		\quad  j = 2, 3, 4 .
	\end{aligned}
	\label{consistency-4}
\end{equation}
Therefore, we define the numerical error functions as follows, at a point-wise level:
\begin{equation}
	\begin{aligned}
		&
		\underline{\boldsymbol e}^k = \underline{\Phi}^k - \boldsymbol m_k ,  \, \, \, k=n, n+1 ,
		\\
		&
		\tilde{\boldsymbol e}^{n,(j)} = \tilde{\boldsymbol Phi}^{n,(j)} - \tilde{\boldsymbol m}_j , \, \, \, j =2 , 3, 4 .
	\end{aligned}
	\label{error function-1}
\end{equation}
Again, instead of a direct comparison between the numerical and exact solutions, we analyze the error between the numerical solution and the constructed approximate solution, due to its higher order consistency. Moreover, the following nonlinear error terms are introduced to simplify the notation: 
\begin{equation}
	\begin{aligned}
		{ NLE}^{n,(1)} &= N_h ( \underline{\Phi}^{n} )  -  N_h ( \boldsymbol m_n ), \\
		{ NLE}^{n,(j)} &= N_h ( \tilde{\Phi}^{n,(j)} )  -  N_h ( \tilde{\boldsymbol m}_j ) ,  \quad  j =2 , 3, 4 .
	\end{aligned}
	\label{NL error-def}
\end{equation}
In turn, a subtraction of the numerical algorithm~\eqref{IMEX-RK3-NL-2-1}--\eqref{IMEX-RK3-NL-2-4} from the consistency estimate~\eqref{consistency-3-1}--\eqref{consistency-3-4} leads to the following numerical error evolution system:

	\begin{small}
		\begin{align}
			&\frac{\tilde{\boldsymbol e}^{n,(2)} - \underline{\boldsymbol e}^n}{k} =0.6250 { NLE}^{n,(1)}  + 0.6250 \beta \Delta_h \tilde{\boldsymbol e}^{n,(2)} ,   \label{consistency-5-1}
			\\
			& \frac{\tilde{\boldsymbol e}^{n,(3)} - \tilde{\boldsymbol e}^{n,(2)}}{k}  = -0.4544 { NLE}^{n,(1)}  + 0.1429 { NLE}^{n,(2)} -0.8609 \beta \Delta_h \tilde{\boldsymbol e}^{n,(2)} + 0.5494 \beta \Delta_h \tilde{\boldsymbol e}^{n,(3)}  ,   \label{consistency-5-2}
			\\
			&\frac{\tilde{\boldsymbol e}^{n,(4)} - \tilde{\boldsymbol e}^{n,(3)}}{k}  = -0.1706 { NLE}^{n,(1)} +0.3071 { NLE}^{n,(2)}+ 0.5500{ NLE}^{n,(3)}  \nonumber
			\\
			&\qquad   \qquad \qquad +0.3209 \beta \Delta_h \tilde{\boldsymbol e}^{n,(2)} + 0.1325 \beta \Delta_h \tilde{\boldsymbol e}^{n,(3)} + 0.2331 \beta \Delta_h \tilde{\boldsymbol e}^{n,(4)} ,   \label{consistency-5-3}
			\\
			&\frac{\underline{\boldsymbol e}^{n+1} - \tilde{\boldsymbol e}^{n,(4)}}{k} = -0.3650 { NLE}^{n,(2)} +0.1319 { NLE}^{n,(3)} +0.2331 { NLE}^{n,(4)}+\tau ^n . \label{consistency-5-4}
		\end{align}
\end{small}

In order to established the convergence analysis, it is necessary to bound the nonlinear error term. For the sake of notation simplicity, a uniform constant ${C}$ is used to represent all controllable constants.
\begin{lemma}  \label{lem: NL error}
	Under the regularity estimate~\eqref{consistency-4} for the constructed profiles, and the following bound in the IMEX-RK stages
	\begin{equation}
		\| \tilde{\boldsymbol m}_j \|_\infty  \le \frac54, \quad
		\| \nabla_h \tilde{\boldsymbol m}_j \|_\infty \le \tilde{ C} := { C}^* +1 ,
		\quad  j = 2, 3, 4 ,
		\label{bound-m-1}
	\end{equation}
	an $\| \cdot \|_2$ estimate for the nonlinear error terms is available:
	\begin{align}
		&\| { NLE}^{n, (1)} \|_2  \le \tilde{M} ( \|  \underline{\boldsymbol e}^{n}  \|_2
		+ \| \nabla_h  \underline{\boldsymbol e}^{n}  \|_2 ) , \label{NLE-error-1}
		\\
		&
		\| { NLE}^{n, (j)} \|_2  \le \tilde{M} ( \|  \tilde{\boldsymbol e}^{n,(j)}  \|_2
		+ \| \nabla_h  \tilde{\boldsymbol e}^{n,(j)}  \|_2 ) ,   \quad  j = 2, 3, 4,
		\label{NLE-error-2}
	\end{align}
	in which $\tilde{M}$ only depends on $\alpha$, $\beta$, ${ C}^*$, $\tilde{ C}$, and the external force term $\boldsymbol f$.
\end{lemma}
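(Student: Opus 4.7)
The plan is to decompose each difference $N_h(\Psi)-N_h(\Xi)$ into the two structural pieces of the nonlinear term in \eqref{NL-discrete-1}, namely the quadratic gradient part $\beta|A_h\nabla_h\boldsymbol m|^2\boldsymbol m$ and the double cross product $-\alpha\boldsymbol m\times(\boldsymbol m\times\boldsymbol f)$, and bound each piece by $\|\cdot\|_\infty$--$\|\cdot\|_2$ Hölder estimates using the hypothesized uniform bounds. Since the algebraic form of the argument is identical whether the comparison profiles are $(\underline{\Phi}^n,\boldsymbol m_n)$ (yielding \eqref{NLE-error-1}) or $(\tilde{\Phi}^{n,(j)},\tilde{\boldsymbol m}_j)$ (yielding \eqref{NLE-error-2}), I would prove a generic estimate and then specialize.

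For the quadratic part, I would use the polarization identity $|a|^2-|b|^2=(a+b)\cdot(a-b)$ applied to $A_h\nabla_h$ of the two profiles, plus an add-and-subtract on the outer factor, writing
\[
|A_h\nabla_h\Psi|^2\Psi-|A_h\nabla_h\Xi|^2\Xi
=|A_h\nabla_h\Psi|^2(\Psi-\Xi)+\bigl(A_h\nabla_h(\Psi+\Xi)\cdot A_h\nabla_h(\Psi-\Xi)\bigr)\Xi.
\]
Taking $\|\cdot\|_2$, using $\|A_h g\|_\infty\le\|g\|_\infty$ and $\|A_h g\|_2\le\|g\|_2$ (which follow from the definition of the averaging stencil), and then inserting the bounds $\|\nabla_h\Psi\|_\infty,\|\nabla_h\Xi\|_\infty\le\max(C^*,\tilde C)$ and $\|\Xi\|_\infty\le 5/4$ from \eqref{consistency-4}--\eqref{bound-m-1}, one obtains a bound of the form $\beta\bigl((C^*)^2\|\Psi-\Xi\|_2+(C^*+\tilde C)\cdot\tfrac54\|\nabla_h(\Psi-\Xi)\|_2\bigr)$.

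For the double cross-product part, I would use the algebraic identity
\[
\Psi\times(\Psi\times\boldsymbol f)-\Xi\times(\Xi\times\boldsymbol f)
=(\Psi-\Xi)\times(\Psi\times\boldsymbol f)+\Xi\times\bigl((\Psi-\Xi)\times\boldsymbol f\bigr),
\]
and apply the pointwise bound $|a\times b|\le|a|\,|b|$ together with the $L^\infty$ bounds on $\Psi$, $\Xi$, and $\boldsymbol f$ (the latter is smooth and bounded because it consists of the anisotropy, stray-field and external terms \eqref{lowerterm} evaluated on bounded profiles). This yields an estimate of the form $\alpha C_f\bigl(\|\Psi\|_\infty+\|\Xi\|_\infty\bigr)\|\Psi-\Xi\|_2$, which is already controlled by $\|\Psi-\Xi\|_2$ alone.

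Summing the two contributions and taking $\tilde M$ to be the maximum of the resulting coefficients — a concrete expression in $\alpha$, $\beta$, $C^*$, $\tilde C$ and $\|\boldsymbol f\|_\infty$ — produces \eqref{NLE-error-1} and \eqref{NLE-error-2}. I expect no genuine obstacle: the estimate is elementary once the two algebraic identities are in hand, and the only care needed is bookkeeping of the averaging operator $A_h$, where the commutation $A_h\nabla_h=\nabla_h A_h$ and the non-expansiveness of $A_h$ in both $\|\cdot\|_2$ and $\|\cdot\|_\infty$ ensure that the $A_h\nabla_h$ factors of the error can be absorbed cleanly into the $\|\nabla_h(\Psi-\Xi)\|_2$ term on the right-hand side.
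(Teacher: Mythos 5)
Your proposal is correct and follows essentially the same route as the paper: the same polarization/add-and-subtract decomposition of the quadratic gradient term (yielding exactly the expansion $\beta|A_h\nabla_h\underline{\Phi}^n|^2\underline{\boldsymbol e}^n+\beta(A_h\nabla_h(\underline{\Phi}^n+\boldsymbol m_n)\cdot A_h\nabla_h\underline{\boldsymbol e}^n)\boldsymbol m_n$ used in \eqref{NL error-2}), the same bilinear splitting of the double cross product, and the same H\"older-type $\|\cdot\|_\infty$--$\|\cdot\|_2$ bounds under \eqref{consistency-4} and \eqref{bound-m-1}. The only cosmetic difference is that you state a generic estimate for a pair $(\Psi,\Xi)$ and specialize, whereas the paper treats ${NLE}^{n,(1)}$ and declares the remaining cases analogous.
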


\begin{proof}
	For simplicity, only the nonlinear error term $\| { NLE}^{n, (1)} \|_2$ is considered, and the estimate of $\| { NLE}^{n, (j)} \|_2$ could be derived in the same manner. In fact, a careful expansion of the term $ { NLE}^{n, (1)} $ indicates that
	\begin{equation}
		\begin{aligned}
			&{ NLE}^{n, (1)} = N_h (\underline{\Phi}^{n} ) - N_h ( \boldsymbol m_n  )
			\\
			&= \beta | { A}_h \nabla_h \underline{\Phi}^{n} |^2
			\underline{\boldsymbol e}^{n} + \beta \Big(
			{ A}_h \nabla_h  ( \underline{\Phi}^{n} + \boldsymbol m_n )
			\cdot { A}_h \nabla_h  \underline{\boldsymbol e}^{n}  \Big) \boldsymbol m_n
			\\
			&
			\quad - \alpha \boldsymbol m_n \times ( \underline{\boldsymbol e}^{n} \times \boldsymbol f )
			- \alpha \underline{\boldsymbol e}^{n} \times ( \underline{\Phi}^{n} \times \boldsymbol f ).
		\end{aligned}
		\label{NL error-2}
	\end{equation}
	As a result, a direct application of discrete H\"older inequality yields 
	\begin{align}
		&
		\Big\| \beta | { A}_h \nabla_h \underline{\Phi}^{n} |^2  \underline{\boldsymbol e}^{n} \Big\|_2
		\le \beta  \|   \nabla_h \underline{\Phi}^{n} \|_\infty^2 \cdot  \| \underline{\boldsymbol e}^{n} \|_2
		\le C \beta ( { C}^* )^2  \| \underline{\boldsymbol e}^{n} \|_2 ,
		\label{NL error-4-1}
		\\
		&
		\Big\| \beta \Big(
		{ A}_h \nabla_h  ( \underline{\Phi}^{n} + \boldsymbol m_n )
		\cdot { A}_h \nabla_h  \underline{\boldsymbol e}^{n}  \Big) \boldsymbol m_n  \Big\|_2    \nonumber
		\\
		& \le
		\beta (  \| \nabla_h  \underline{\Phi}^{n} \|_\infty + \| \nabla_h \boldsymbol m_n \|_\infty )
		\cdot \| \nabla_h  \underline{\boldsymbol e}^{n} \|_2 \cdot \| \boldsymbol m_n \|_\infty  \nonumber
		\\
		& \le
		C \beta (  { C}^* + \tilde{ C} )  \| \nabla_h  \underline{\boldsymbol e}^{n} \|_2 ,
		\label{NL error-4-2}
		\\
		&
		\Big\|  \alpha \boldsymbol m_n \times ( \underline{\boldsymbol e}^{n} \times \boldsymbol f )   \Big\|_2
		\le \alpha  \| \boldsymbol m_n \|_\infty \cdot \| \boldsymbol f \|_\infty \cdot \| \underline{\boldsymbol e}^{n} \|_2  \nonumber
		\\
		&
		\le \frac{5 \alpha}{4} C_0 \|\underline{\boldsymbol e}^{n} \|_2
		\le C \alpha C_0  \| \underline{\boldsymbol e}^{n} \|_2 ,  \label{NL error-4-3}
		\\
		&\Big\|  \alpha \underline{\boldsymbol e}^{n} \times ( \underline{\Phi}^{n} \times \boldsymbol f )  \Big\|_2
		\le \alpha  \| \underline{\Phi}^{n} \|_\infty \cdot \| \boldsymbol f \|_\infty \cdot \| \underline{\boldsymbol e}^{n} \|_2  \nonumber
		\\
		&
		\le \frac{9 \alpha}{8} C_0 \| \underline{\boldsymbol e}^{n} \|_2
		\le C \alpha C_0  \| \underline{\boldsymbol e}^{n} \|_2 ,  \label{NL error-4-4}
	\end{align}
	in which the summation-by-parts formula~\eqref{summation} and the bound~\eqref{bound-m-1} have been applied, along with the fact that $\| \cdot \|_\infty$ bound for the external force term: $\| \boldsymbol f \|_\infty \le C_0$. A substitution of~\eqref{NL error-4-1}--\eqref{NL error-4-4} into \eqref{NL error-2} leads to the nonlinear error estimate \eqref{NLE-error-1}, by taking $\tilde{M} =  C (  \beta ( (  { C}^* )^2 +  { C}^* + \tilde{ C})+ \alpha C_0  ) $. The proof of Lemma~\ref{lem: NL error} is finished.
\end{proof}

Before proceeding into the formal error estimate, the following a-priori assumption is made for the error function at the previous time step:
\begin{equation} \label{bound-2}
	\| \underline{\boldsymbol e}^n \|_2 \le k^{\frac{11}{4}} + h^{\frac{15}{4}}, \qquad \| \nabla_h \underline{\boldsymbol e}^n \|_2 \le k^{\frac{9}{4}} + h^{\frac{13}{4}} .
\end{equation}
As stated above, the multi-stage nature of the third order Runge-Kutta scheme, as well as the complicated nonlinear terms, make the theoretical analysis highly challenging. Therefore, the error estimates at each RK stage are separately discussed. 

\noindent
{\bf Error estimate at Runge-Kutta Stage 1}  \, \, In the first stage, by taking a discrete inner product with~\eqref{consistency-5-1} by $2 \tilde{\boldsymbol e}^{n, (2)}$, it follow 
	\begin{equation}
		\| \tilde{\boldsymbol e}^{n, (2)} \|_2^2 - \| \underline{\boldsymbol e}^n \|_2^2 + \| \tilde{\boldsymbol e}^{n, (2)} - \underline{\boldsymbol e}^n \|_2^2
		+ \frac{5}{4}\beta k \| \nabla_h \tilde{\boldsymbol e}^{n, (2)} \|_2^2 = \frac{5}{4} k \langle { NLE}^{n, (1)} ,  \tilde{\boldsymbol e}^{n, (2)} \rangle ,
		\label{convergence-1}
\end{equation}
based on an application of the summation-by-parts formula~\eqref{sum1}, and the discrete homogeneous Neumann boundary condition for $\tilde{\boldsymbol e}^{n, (2)}$. In terms of the inner product term associated with the nonlinear error, the following estimates are derived: 
\begin{equation}
	\begin{aligned}
		&
		\langle { NLE}^{n, (1)} ,  \tilde{\boldsymbol e}^{n, (2)} \rangle
		\le   \| { NLE}^{n, (1)} \|_2 \cdot \| \tilde{\boldsymbol e}^{n, (2)}  \|_2
		\\
		&
		\le \tilde{M} ( \|  \underline{\boldsymbol e}^{n}  \|_2
		+ \| \nabla_h  \underline{\boldsymbol e}^{n}  \|_2 )  \cdot \| \tilde{\boldsymbol e}^{n, (2)}  \|_2
		\\
		&
	\le \frac{\tilde{M}}{2} \|  \underline{\boldsymbol e}^{n}  \|_2^2
			+ \frac{\beta}{2500} \| \nabla_h  \underline{\boldsymbol e}^{n}  \|_2^2
			+ (  \frac{\tilde{M}}{2} + \frac{625 \tilde{M}^2}{\beta} ) \| \tilde{\boldsymbol e}^{n, (2)}  \|_2^2, 
	\end{aligned}
	\label{convergence-1-1}
\end{equation}
in which the Young's inequality has been applied in the last step. In turn, we denote ${C}_1=\frac{5}{4}(  \frac{\tilde{M}}{2} + \frac{625 \tilde{M}^2}{\beta} ) $, and see that the right-hand side of \eqref{convergence-1} is bounded as 
	\begin{align}
		&
		\frac{5k}{4} \langle { NLE}^{n, (1)} ,  \tilde{\boldsymbol e}^{n, (2)} \rangle \le \frac{5}{8}\tilde{M}k \|  \underline{\boldsymbol e}^{n}  \|_2^2 + \frac{\beta k}{2000} \| \nabla_h  \underline{\boldsymbol e}^{n}  \|_2^2
		+ {C}_1 k  \| \tilde{\boldsymbol e}^{n, (2)} \|_2^2 .   	
		\label{convergence-1-3}
\end{align}
Its substitution into \eqref{convergence-1} yields
	\begin{align}
		&
		\| \tilde{\boldsymbol e}^{n, (2)} \|_2^2 - \| \underline{\boldsymbol e}^n \|_2^2 + \| \tilde{\boldsymbol e}^{n, (2)} - \underline{\boldsymbol e}^n \|_2^2
		+ \frac54 \beta k \| \nabla_h \tilde{\boldsymbol e}^{n, (2)} \|_2^2 \nonumber
		\\
		&
		\le \frac{5}{8}\tilde{M}k \|  \underline{\boldsymbol e}^{n}  \|_2^2
		+ \frac{\beta k}{2000}\| \nabla_h  \underline{\boldsymbol e}^{n}  \|_2^2+ {C}_1k  \| \tilde{\boldsymbol e}^{n, (2)} \|_2^2.
		\label{convergence-1-4}
\end{align} 
Furthermore, under the linear refinement requirement $C_1 h \le k \le C_2 h$ and the assumption that $k$ is sufficiently small, the rough error estimates could be obtained as follows 
\begin{align}
	&\| \tilde{\boldsymbol e}^{n, (2)} \|_2 \le  \Big( \frac{1+ {C}_2 k}{1 - {C}_1 k } \Big)^\frac12 (k^{\frac{9}{4}} + h^{\frac{5}{4}}) \le 2 (k^{\frac{9}{4}} + h^{\frac{5}{4}} ),
	\label{bound-stage-1-1}
	\\
	&\| \nabla_h \tilde{\boldsymbol e}^{n, (2)} \|_2 \le   \beta^{-\frac12} k^{-\frac12}
	\Big( 1+ {C}_2 k \Big)^\frac12  (k^{\frac{9}{4}} + h^{\frac{5}{4}})
	\le k^{\frac{7}{4}} + h^{\frac{3}{4}} , 
	\label{bound-stage-1-2}
\end{align} 
by taking ${C}_2=\frac{5\tilde{M}}{8}+\frac{\beta }{2000}$ due to the a-priori assumption~\eqref{bound-2}. Likewise, the $\| \cdot \|_\infty$ bound for both the numerical error function $\tilde{\boldsymbol e}^{n, (2)}$ and the numerical solution $\tilde{\boldsymbol m}_2$ are available: 
\begin{align}
	&
	\| \tilde{\boldsymbol e}^{n, (2)}  \|_\infty \le \gamma {h}^{-1/2 }
	( \| \tilde{\boldsymbol e}^{n, (2)}  \|_2 + \| \nabla_h \tilde{\boldsymbol e}^{n, (2)}  \|_2 )
	\le  \gamma \Big( \frac{k^{\frac{7}{4}}}{h^\frac12} + h^{\frac{9}{4}} \Big) \le \frac18 ,  	
	\label{bound-stage-1-3} 	
	\\
	& 	
	\|  \nabla_h \tilde{\boldsymbol e}^{n, (2)} \|_\infty \le \gamma {h}^{-\frac{3}{2}} \| \nabla_h \tilde{\boldsymbol e}^{n, (2)} \|_2
	\le  \gamma \Big( \frac{k^{\frac{7}{4}}}{h^\frac32} + h^{\frac54} \Big) \le 1 , 	
	\label{bound-stage-1-4}
	\\
	&
	\| \tilde{\boldsymbol m}_2 \|_\infty \le \| \tilde{\Phi}^{n, (2)} \|_\infty
	+ \| \tilde{\boldsymbol e}^{n, (2)}  \|_{\infty} \le \frac98 + \frac18 = \frac54 ,  	
	\label{bound-stage-1-5} 	
	\\
	& 	
	\| \nabla_h \tilde{\boldsymbol m}_2 \|_\infty \le \| \nabla_h \tilde{\Phi}^{n, (2)} \|_\infty
	+ \|  \nabla_h \tilde{\boldsymbol e}^{n, (2)} \|_\infty \le { C}^* + 1 = \tilde{ C} . 	\label{bound-stage-1-6} 	
\end{align}

\noindent
{\bf Error estimate at Runge-Kutta Stage 2}  \, \, Similarly, taking a discrete inner product with~\eqref{consistency-5-2} by $2 \tilde{\boldsymbol e}^{n, (3)}$ leads to
	\begin{small}
		\begin{equation}
			\begin{aligned}
				&
				\| \tilde{\boldsymbol e}^{n, (3)} \|_2^2 - \| \tilde{\boldsymbol e}^{n, (2)} \|_2^2 + \| \tilde{\boldsymbol e}^{n, (3)} - \tilde{\boldsymbol e}^{n, (2)} \|_2^2
				+ 1.0988 \beta k \| \nabla_h \tilde{\boldsymbol e}^{n, (3)} \|_2^2=
				\\
				&- 0.9088k\langle { NLE}^{n, (1)}, \tilde{\boldsymbol e}^{n, (3)} \rangle +  0.2858k\langle { NLE}^{n, (2)}, \tilde{\boldsymbol e}^{n, (3)} \rangle + 1.7218 \beta k \langle \nabla_h \tilde{\boldsymbol e}^{n, (2)},  \nabla_h \tilde{\boldsymbol e}^{n, (3)} \rangle.
			\end{aligned}
			\label{convergence-2-1}
		\end{equation}
\end{small}
A bound for the last term on the right-hand side is straightforward 
\begin{equation}
	\begin{aligned}
		&
		\langle \nabla_h \tilde{\boldsymbol e}^{n, (2)} ,  \nabla_h \tilde{\boldsymbol e}^{n, (3)} \rangle
		\le  \frac12 ( \| \nabla_h \tilde{\boldsymbol e}^{n, (2)} \|_2^2 + \|  \nabla_h \tilde{\boldsymbol e}^{n, (3)} \|_2^2 ) ,\quad \mbox{so that}
		\\
		&
		 1.7218 \beta k \langle \nabla_h \tilde{\boldsymbol e}^{n, (2)} ,  \nabla_h \tilde{\boldsymbol e}^{n, (3)} \rangle
			\le 0.8609 \beta k ( \| \nabla_h \tilde{\boldsymbol e}^{n, (2)} \|_2^2 + \|  \nabla_h \tilde{\boldsymbol e}^{n, (3)} \|_2^2 ).
	\end{aligned}
	\label{convergence-2-2}
\end{equation}
The nonlinear error terms, as well as the corresponding inner product, could be analyzed in a similar manner with the help of Lemma~\ref{lem: NL error}, which implies the estimates as follows:
\begin{align}
	\langle { NLE}^{n, (1)}, \tilde{\boldsymbol e}^{n, (3)} \rangle \le
	\frac{\tilde{M}}{2} \|  \underline{\boldsymbol e}^{n}  \|_2^2 + \frac{5\beta}{4544} \| \nabla_h  \underline{\boldsymbol e}^{n}  \|_2^2
		+ (\frac{\tilde{M}}{2}+\frac{1136 \tilde{M}^2}{5\beta}) \| \tilde{\boldsymbol e}^{n, (3)} \|_2^2 ,   \label{convergence-2-3}
	\\
	\langle { NLE}^{n, (2)}, \tilde{\boldsymbol e}^{n, (3)} \rangle \le
	\frac{\tilde{M}}{2} \|  \tilde{\boldsymbol e}^{n, (2)} \|_2^2
		+ \frac{5\beta}{1429} \| \nabla_h  \tilde{\boldsymbol e}^{n, (2)}  \|_2^2
		+ (\frac{\tilde{M}}{2}+\frac{1429 \tilde{M}^2}{20\beta}) \| \tilde{\boldsymbol e}^{n, (3)} \|_2^2 , \label{convergence-2-4}
\end{align}
under the regularity estimate~\eqref{consistency-4} and the bound~\eqref{NLE-error-1}--\eqref{NLE-error-2}. Subsequently, a substitution of~\eqref{convergence-2-2}--\eqref{convergence-2-4} into \eqref{convergence-2-1} yields
	\begin{equation}
		\begin{aligned}
			&
			\| \tilde{\boldsymbol e}^{n, (3)} \|_2^2 - \| \tilde{\boldsymbol e}^{n, (2)} \|_2^2 + \| \tilde{\boldsymbol e}^{n, (3)} - \tilde{\boldsymbol e}^{n, (2)} \|_2^2
			+ 0.2379 \beta k \| \nabla_h \tilde{\boldsymbol e}^{n, (3)} \|_2^2 - 0.8619 \beta k
			\\
			&\| \nabla_h \tilde{\boldsymbol e}^{n, (2)} \|_2^2 \le  0.4544\tilde{M}k\| \underline{\boldsymbol e}^{n}  \|_2^2
			+ \frac{\beta k}{1000}   \| \nabla_h \underline{\boldsymbol e}^{n}  \|_2^2 + 0.1429\tilde{M}k \| \tilde{\boldsymbol e}^{n, (2)}  \|_2^2 + {C}_3 k \| \tilde{\boldsymbol e}^{n, (3)} \|_2^2,
		\end{aligned}
		\label{convergence-2-5}
\end{equation}
with 	 ${C}_3=0.9088(\frac{\tilde{M}}{2}+\frac{1136 \tilde{M}^2}{5\beta})+0.2858(\frac{\tilde{M}}{2}+\frac{1429 \tilde{M}^2}{20\beta})$. Furthermore, its combination with~\eqref{convergence-1-4} indicates that	
	\begin{equation}
		\begin{aligned}
			&
			\| \tilde{\boldsymbol e}^{n, (3)} \|_2^2 - \| \underline{\boldsymbol e}^n \|_2^2 + \| \tilde{\boldsymbol e}^{n, (3)} - \tilde{\boldsymbol e}^{n, (2)} \|_2^2 + \| \tilde{\boldsymbol e}^{n, (2)} - \underline{\boldsymbol e}^n \|_2^2
			\\
			&+ 0.3881 \beta k\| \nabla_h \tilde{\boldsymbol e}^{n, (2)} \|_2^2
			+ 0.2379 \beta k \| \nabla_h \tilde{\boldsymbol e}^{n, (3)} \|_2^2 \le  (0.4544+\frac58)\tilde{M}k\|\underline{\boldsymbol e}^n \|_2^2 
			\\
			&  +\frac{3\beta k}{2000}\| \nabla_h \underline{\boldsymbol e}^{n}  \|_2^2+ (0.1429\tilde{M}+{C}_1 )k \|  \tilde{\boldsymbol e}^{n,(2)}  \|_2^2
			+  {C}_3  k \| \tilde{\boldsymbol e}^{n, (3)} \|_2^2 .
		\end{aligned}
		\label{convergence-2-6}
\end{equation}	
Applying the a-priori estimate~\eqref{bound-2} and ~\eqref{bound-stage-1-1}, it follows that
\begin{align}
	\| \tilde{\boldsymbol e}^{n, (3)} \|_2 &\le  \Big( \frac{1+ {C}_4 k}{1 - {C}_3  k } \Big)^\frac12 (k^{\frac{9}{4}} + h^{\frac{13}{4}}) \le 2 (k^{\frac{9}{4}} + h^{\frac{13}{4}}),
	\label{bound-stage-2-1}
	\\
	\| \nabla_h \tilde{\boldsymbol e}^{n, (3)} \|_2 &\le 	\sqrt{5} \beta^{-\frac12} k^{-\frac12}
	\Big( 1+ {C}_4 k \Big)^\frac12  (k^{\frac{9}{4}} + h^{\frac{13}{4}})
	\le  k^{\frac{7}{4}} + h^{\frac{11}{4}}  ,
	\label{bound-stage-2-2}
\end{align}
by taking~${C}_4 ={C}_1 +1.2223\tilde{M}+\frac{3\beta }{2000}$ and under the linear refinement requirement $C_1 h \le k \le C_2 h$. In turn, the $\| \cdot \|_\infty$ bound for both the numerical error function $\tilde{\boldsymbol e}^{n, (3)}$ and the numerical solution $\tilde{\boldsymbol m}_3$ are revealed:
\begin{align}
	&
	\| \tilde{\boldsymbol e}^{n, (3)}  \|_{\infty} \le \gamma {h}^{-1/2 }
	( \| \tilde{\boldsymbol e}^{n, (3)}  \|_2 + \| \nabla_h \tilde{\boldsymbol e}^{n, (3)}  \|_2 )
	\le  \gamma \Big( \frac{k^{\frac{7}{4}}}{h^\frac12} + h^{\frac94} \Big) \le \frac18 ,  	
	\label{bound-stage-2-3} 	
	\\
	& 	
	\|  \nabla_h \tilde{\boldsymbol e}^{n, (3)} \|_\infty \le \gamma {h}^{-\frac{3}{2}} \| \nabla_h \tilde{\boldsymbol e}^{n, (3)} \|_2 \le 1 , 	
	\label{bound-stage-2-4} 	
	\\
	&
	\| \tilde{\boldsymbol m}_3 \|_\infty \le \| \tilde{\Phi}^{n, (3)} \|_\infty
	+ \| \tilde{\boldsymbol e}^{n, (3)}  \|_{\infty} \le \frac98 + \frac18 = \frac54 ,  	
	\label{bound-stage-2-5} 	
	\\
	& 	
	\| \nabla_h \tilde{\boldsymbol m}_3 \|_\infty \le \| \nabla_h \tilde{\Phi}^{n, (3)} \|_\infty
	+ \|  \nabla_h \tilde{\boldsymbol e}^{n, (3)} \|_\infty \le { C}^* + 1 = \tilde{ C} . 	
	\label{bound-stage-2-6} 	
\end{align}

\noindent
{\bf Error estimate at Runge-Kutta Stage 3}  \, \, Taking a discrete inner product with~\eqref{consistency-5-3} by $2 \tilde{\boldsymbol e}^{n, (4)}$ yields 
	\begin{equation}
		\begin{aligned}
			&
			\| \tilde{\boldsymbol e}^{n, (4)} \|_2^2 - \| \tilde{\boldsymbol e}^{n, (3)} \|_2^2 + \| \tilde{\boldsymbol e}^{n, (4)} - \tilde{\boldsymbol e}^{n, (3)} \|_2^2+ 0.4662 \beta k\| \nabla_h \tilde{\boldsymbol e}^{n, (4)} \|_2^2 
			\\
			&
			= - 0.3412k \langle { NLE}^{n, (1)} ,  \tilde{\boldsymbol e}^{n, (4)} \rangle
			+0.6142 k \langle { NLE}^{n, (2)} ,  \tilde{\boldsymbol e}^{n, (4)} \rangle+ 1.1k \langle { NLE}^{n, (3)} ,  \tilde{\boldsymbol e}^{n, (4)} \rangle
			\\
			&-0.6418 \beta k \langle \nabla_h \tilde{\boldsymbol e}^{n, (2)} ,  \nabla_h \tilde{\boldsymbol e}^{n, (4)} \rangle-0.265 \beta k \langle \nabla_h \tilde{\boldsymbol e}^{n, (3)} ,  \nabla_h \tilde{\boldsymbol e}^{n, (4)} \rangle.
		\end{aligned}
		\label{convergence-3-1}
\end{equation}
As described above, the nonlinear inner product for gradient terms on the right-hand side could be controlled in the same way as in~\eqref{convergence-2-2}: 
\begin{equation}
	\begin{aligned}
		&
		\langle \nabla_h \tilde{\boldsymbol e}^{n, (2)} ,  \nabla_h \tilde{\boldsymbol e}^{n, (4)} \rangle
		\le  \frac12 ( \| \nabla_h \tilde{\boldsymbol e}^{n, (2)} \|_2^2 + \|  \nabla_h \tilde{\boldsymbol e}^{n, (4)} \|_2^2 ) ,\quad \mbox{so that}
		\\
		&
		0.6418 \beta k \langle \nabla_h \tilde{\boldsymbol e}^{n, (2)} ,  \nabla_h \tilde{\boldsymbol e}^{n, (4)} \rangle
			\le  0.3209 \beta k ( \| \nabla_h \tilde{\boldsymbol e}^{n, (2)} \|_2^2 + \|  \nabla_h \tilde{\boldsymbol e}^{n, (4)} \|_2^2 ),
	\end{aligned}
	\label{convergence-3-2}
\end{equation}
\begin{equation}
	\begin{aligned}
		&
		\langle \nabla_h \tilde{\boldsymbol e}^{n, (3)} ,  \nabla_h \tilde{\boldsymbol e}^{n, (4)} \rangle
		\le  \frac12 ( \| \nabla_h \tilde{\boldsymbol e}^{n, (3)} \|_2^2 + \|  \nabla_h \tilde{\boldsymbol e}^{n, (4)} \|_2^2 ) ,\quad \mbox{so that}
		\\
		&
		0.265 \beta k \langle \nabla_h \tilde{\boldsymbol e}^{n, (3)} ,  \nabla_h \tilde{\boldsymbol e}^{n, (4)} \rangle
			\le  0.1325 \beta k ( \| \nabla_h \tilde{\boldsymbol e}^{n, (3)} \|_2^2 + \|  \nabla_h \tilde{\boldsymbol e}^{n, (4)} \|_2^2 ),
	\end{aligned}
	\label{convergence-3-3}
\end{equation}
\begin{equation}
	\begin{aligned}
		&
		\langle { NLE}^{n, (1)}, \tilde{\boldsymbol e}^{n, (4)} \rangle \le
	\frac{\tilde{M}}{2} \| {\underline{\boldsymbol e}}^{n} \|_2^2
			+ \frac{5\beta}{1706} \| \nabla_h {\underline{\boldsymbol e}}^{n}  \|_2^2
			+ (\frac{\tilde{M}}{2}+\frac{853 \tilde{M}^2}{10\beta}) \| \tilde{\boldsymbol e}^{n, (4)} \|_2^2,
		\\
		&
		\langle { NLE}^{n, (2)}, \tilde{\boldsymbol e}^{n, (4)} \rangle \le
\frac{\tilde{M}}{2} \| \tilde{\boldsymbol e}^{n, (2)} \|_2^2
			+ \frac{5\beta}{3071} \| \nabla_h \tilde{\boldsymbol e}^{n, (2)}  \|_2^2
			+ (\frac{\tilde{M}}{2}+\frac{3071\tilde{M}^2}{20\beta}) \| \tilde{\boldsymbol e}^{n, (4)} \|_2^2,
		\\
		&
		\langle { NLE}^{n, (3)}, \tilde{\boldsymbol e}^{n, (4)} \rangle \le
\frac{\tilde{M}}{2} \| \tilde{\boldsymbol e}^{n, (3)} \|_2^2
			+ \frac{\beta}{1100} \| \nabla_h \tilde{\boldsymbol e}^{n, (3)}  \|_2^2
			+ (\frac{\tilde{M}}{2}+\frac{275\tilde{M}^2}{\beta}) \| \tilde{\boldsymbol e}^{n, (4)} \|_2^2,
	\end{aligned}	
	\label{convergence-3-4}
\end{equation}
with the help of the a-priori bound~\eqref{bound-2}~and the regularity estimate~\eqref{consistency-4}. As a consequence, a substitution of~\eqref{convergence-3-2}--\eqref{convergence-3-4} into \eqref{convergence-3-1} leads to

	\begin{equation}
		\begin{aligned}
			&
			\| \tilde{\boldsymbol e}^{n, (4)} \|_2^2 - \| \tilde{\boldsymbol e}^{n, (3)} \|_2^2 + \| \tilde{\boldsymbol e}^{n, (4)} - \tilde{\boldsymbol e}^{n, (3)} \|_2^2
			+ 0.0128 \beta k \| \nabla_h \tilde{\boldsymbol e}^{n, (4)} \|_2^2
			\\
			&
			- 0.3219 \beta k\| \nabla_h \tilde{\boldsymbol e}^{n, (2)} \|_2^2 - 0.1335 \beta k \| \nabla_h \tilde{\boldsymbol e}^{n, (3)} \|_2^2
			\le 0.1706\tilde{M} k \| \underline{\boldsymbol e}^{n} \|_2^2
			\\
			&
			+ 0.001 \beta k \| \nabla_h  \underline{\boldsymbol e}^{n}  \|_2^2+ 0.3071 \tilde{M} k \| \tilde{\boldsymbol e}^{n, (2)} \|_2^2  + 0.55 \tilde{M} k \| \tilde{\boldsymbol e}^{n, (3)} \|_2^2 +  {C}_5  k \| \tilde{\boldsymbol e}^{n, (4)} \|_2^2 , 
		\end{aligned}
		\label{convergence-3-7}
\end{equation}
with ${C}_5=0.3412(\frac{\tilde{M}}{2}+\frac{853 \tilde{M}^2}{10\beta})+0.6142(\frac{\tilde{M}}{2}+\frac{3071 \tilde{M}^2}{20\beta})+1.1(\frac{\tilde{M}}{2}+\frac{275 \tilde{M}^2}{\beta})$. In turn, its combination with~\eqref{convergence-2-6} gives

	\begin{equation}
		\begin{aligned}
			&
			\| \tilde{\boldsymbol e}^{n, (4)} \|_2^2 - \| \underline{\boldsymbol e}^n \|_2^2 + \| \tilde{\boldsymbol e}^{n, (2)} - \underline{\boldsymbol e}^n \|_2^2
			+ \| \tilde{\boldsymbol e}^{n, (3)} - \tilde{\boldsymbol e}^{n, (2)} \|_2^2 +\| \tilde{\boldsymbol e}^{n, (4)} - \tilde{\boldsymbol e}^{n, (3)} \|_2^2
			\\
			&
			+ 0.0662 \beta  k \| \nabla_h \tilde{\boldsymbol e}^{n, (2)} \|_2^2
			+ 0.1044 \beta  k \| \nabla_h \tilde{\boldsymbol e}^{n, (3)} \|_2^2+ 0.0128 \beta k\| \nabla_h \tilde{\boldsymbol e}^{n, (4)} \|_2^2
			\\
			&
			\le  1.25\tilde{M}k\| \underline{\boldsymbol e}^{n}  \|_2^2  + 0.0025\beta k \| \nabla_h  \underline{\boldsymbol e}^{n}  \|_2^2+ (0.45\tilde{M} +{C}_1  )k \|  \tilde{\boldsymbol e}^{n,(2)}  \|_2^2
			\\
			&\quad +  (0.55\tilde{M} +{C}_3)  k \| \tilde{\boldsymbol e}^{n, (3)} \|_2^2  +  {C}_5  k\| \tilde{\boldsymbol e}^{n, (4)} \|_2^2.
		\end{aligned}
		\label{convergence-3-8}
\end{equation} 	

Similarly, with the help of the a-priori estimates~\eqref{bound-2}, and the bound derived  in the first and second RK stages, we arrive at the following rough error estimates at stage 3, by taking~${C}_6 ={C}_1 +{C}_3 +2.25\tilde{M}+0.0025\beta$: 
\begin{align}
	&
	\| \tilde{\boldsymbol e}^{n, (4)} \|_2 \le  \Big( \frac{1+ {C}_6 k }{1 - {C}_5  k } \Big)^\frac12 (k^{\frac{9}{4}} + h^{\frac{13}{4}}) \le 2(k^{\frac{9}{4}} + h^{\frac{13}{4}}), \label{bound-stage-3-1}\\
	& \| \nabla_h \tilde{\boldsymbol e}^{n, (4)} \|_2  \le 9 \beta^{-\frac12} k^{-\frac12}
	\Big( 1+ {C}_6 k \Big)^\frac12 (k^{\frac{9}{4}} + h^{\frac{13}{4}})  \label{bound-stage-3-2}
	\le k^{\frac{7}{4}} + h^{\frac{11}{4}} . 
\end{align}
In turn, by the aid of inverse inequalities, the $\| \cdot \|_\infty$ bound for both the numerical error function $\tilde{\boldsymbol e}^{n, (4)}$ and the numerical solution $\tilde{\boldsymbol m}_4$ could be derived as follows 
\begin{align}
	&
	\| \tilde{\boldsymbol e}^{n, (4)}  \|_{\infty} \le \gamma {h}^{-1/2 }
	( \| \tilde{\boldsymbol e}^{n, (4)}  \|_2 + \| \nabla_h \tilde{\boldsymbol e}^{n, (4)}  \|_2 )
	\le  \gamma \Big( \frac{k^{\frac{7}{4}}}{h^\frac12} + h^{\frac94} \Big) \le \frac18 ,  	
	\label{bound-stage-3-3} 	
	\\
	& 	
	\|  \nabla_h \tilde{\boldsymbol e}^{n, (4)} \|_\infty \le \gamma {h}^{-\frac{3}{2}} \| \nabla_h \tilde{\boldsymbol e}^{n, (4)} \|_2 \le 1 , 	
	\label{bound-stage-3-4} 	
	\\
	&
	\| \tilde{\boldsymbol m}_4 \|_\infty \le \| \tilde{\Phi}^{n, (4)} \|_\infty
	+ \| \tilde{\boldsymbol e}^{n, (4)}  \|_{\infty} \le \frac98 + \frac18 = \frac54 ,  	
	\label{bound-stage-3-5} 	
	\\
	& 	
	\| \nabla_h \tilde{\boldsymbol m}_4 \|_\infty \le \| \nabla_h \tilde{\Phi}^{n, (4)} \|_\infty
	+ \|  \nabla_h \tilde{\boldsymbol e}^{n, (4)} \|_\infty \le { C}^* + 1 = \tilde{ C} . 	
	\label{bound-stage-3-6} 	
\end{align}

\noindent
{\bf Error estimate at Runge-Kutta Stage 4}  \, \, Similar to the previous analysis, taking a discrete inner product with~\eqref{consistency-5-4} by $2 \underline{\boldsymbol e}^{n+1}$ results in

	\begin{equation}
		\begin{aligned}
			&
			\| \underline{\boldsymbol e}^{n+1} \|_2^2 - \| \tilde{\boldsymbol e}^{n, (4)} \|_2^2 + \| \underline{\boldsymbol e}^{n+1} - \tilde{\boldsymbol e}^{n, (4)} \|_2^2
			- 2k \langle \tau ^n ,  \underline{\boldsymbol e}^{n+1} \rangle 
			\\
			&
			=  -0.73 k\langle { NLE}^{n, (2)} ,  \underline{\boldsymbol e}^{n+1} \rangle +0.2638 k \langle { NLE}^{n, (3)} ,  \underline{\boldsymbol e}^{n+1} \rangle
			+ 0.4662 k \langle { NLE}^{n, (4)} ,  \underline{\boldsymbol e}^{n+1} \rangle  .
		\end{aligned}
		\label{convergence-4-1}
\end{equation}
A bound for the local truncation error inner product term is obvious:
\begin{equation}
	\langle \tau ^n ,  \underline{\boldsymbol e}^{n+1} \rangle \le \frac12 (  \| \tau ^n \|_2^2 + \|  \underline{\boldsymbol e}^{n+1}  \|_2^2 ) .
	\label{convergence-4-2}
\end{equation}
Likewise, the estimates for the nonlinear error terms could be similarly performed:
\begin{equation}
	\begin{aligned}
		&
		\langle { NLE}^{n, (2)}, {\underline{\boldsymbol e}}^{n+1} \rangle \le
	\frac{\tilde{M}}{2} \| \tilde{\boldsymbol e}^{n, (2)}  \|_2^2
			+ \frac{\beta}{730} \| \nabla_h \tilde{\boldsymbol e}^{n, (2)}  \|_2^2
			+ (\frac{\tilde{M}}{2}+\frac{365 \tilde{M}^2}{2\beta}) \| {\underline{\boldsymbol e}}^{n+1} \|_2^2,
		\\
		&
		\langle { NLE}^{n, (3)}, {\underline{\boldsymbol e}}^{n+1} \rangle \le
		\frac{\tilde{M}}{2} \| \tilde{\boldsymbol e}^{n, (3)} \|_2^2
			+ \frac{5\beta}{1319} \| \nabla_h \tilde{\boldsymbol e}^{n, (3)}  \|_2^2
			+ (\frac{\tilde{M}}{2}+\frac{1319\tilde{M}^2}{20\beta}) \| {\underline{\boldsymbol e}}^{n+1}\|_2^2,
		\\
		&
		\langle { NLE}^{n, (4)}, {\underline{\boldsymbol e}}^{n+1} \rangle \le
	\frac{\tilde{M}}{2} \| \tilde{\boldsymbol e}^{n, (4)} \|_2^2
			+ \frac{5\beta}{2331} \| \nabla_h \tilde{\boldsymbol e}^{n, (4)}  \|_2^2
			+ (\frac{\tilde{M}}{2}+\frac{2331\tilde{M}^2}{20\beta}) \| {\underline{\boldsymbol e}}^{n+1} \|_2^2.
	\end{aligned}	
	\label{convergence-4-3}
\end{equation}

Subsequently, a substitution of~\eqref{convergence-4-2}--\eqref{convergence-4-3}~into \eqref{convergence-4-1} yields 

	\begin{equation}
		\begin{aligned}
			&
			\| \underline{\boldsymbol e}^{n+1} \|_2^2 - \| \tilde{\boldsymbol e}^{n, (4)} \|_2^2 + \| \underline{\boldsymbol e}^{n+1} - \tilde{\boldsymbol e}^{n, (4)} \|_2^2
			- 0.001 \beta k \| \nabla_h \tilde{\boldsymbol e}^{n, (2)} \|_2^2
			\\
			&
			- 0.001 \beta k \| \nabla_h \tilde{\boldsymbol e}^{n, (3)} \|_2^2
			- 0.001 \beta k \| \nabla_h \tilde{\boldsymbol e}^{n, (4)} \|_2^2 \le 0.365\tilde{M} k  \|  \tilde{\boldsymbol e}^{n, (2)}  \|_2^2
			\\
			&
			+ 0.1319 \tilde{M} k \|  \tilde{\boldsymbol e}^{n, (3)}  \|_2^2
			+ 0.2331 \tilde{M} k \|  \tilde{\boldsymbol e}^{n, (4)}  \|_2^2
			+ {C}_7 k \| \underline{\boldsymbol e}^{n+1}  \|_2^2
			+ k (  \| \tau^n \|_2^2 + \|  \underline{\boldsymbol e}^{n+1}  \|_2^2 ) , 
		\end{aligned}
		\label{convergence-4-6}
\end{equation}
with~${C}_7=0.73(\frac{\tilde{M}}{2}+\frac{365\tilde{M}^2}{2\beta})+0.2638(\frac{\tilde{M}}{2}+\frac{1319 \tilde{M}^2}{20\beta})+0.4662(\frac{\tilde{M}}{2}+\frac{2331 \tilde{M}^2}{20\beta})$. Its combination with~\eqref{convergence-3-8} leads to 
	\begin{equation}
		\begin{aligned}
			&
			\| \underline{\boldsymbol e}^{n+1} \|_2^2 - \| \underline{\boldsymbol e}^n \|_2^2 + \| \tilde{\boldsymbol e}^{n, (2)} - \underline{\boldsymbol e}^n \|_2^2
			+ \| \tilde{\boldsymbol e}^{n, (3)} - \tilde{\boldsymbol e}^{n, (2)} \|_2^2 + \| \underline{\boldsymbol e}^{n+1} - \tilde{\boldsymbol e}^{n, (4)} \|_2^2
			\\
			& 
			+ 0.0652 \beta k  \| \nabla_h \tilde{\boldsymbol e}^{n, (2)} \|_2^2 + 0.1034\beta k  \| \nabla_h \tilde{\boldsymbol e}^{n, (3)} \|_2^2 + 0.0118 \beta k  \| \nabla_h \tilde{\boldsymbol e}^{n, (4)} \|_2^2
			\\
			&
			\le 1.25\tilde{M}k \| \underline{\boldsymbol e}^{n}  \|_2^2  + 0.0025\beta k \| \nabla_h  \underline{\boldsymbol e}^{n}  \|_2^2  + (0.815\tilde{M}+{C}_1)  k \| \tilde{\boldsymbol e}^{n, (2)}  \|_2^2+(0.6819\tilde{M}+{C}_3)  k
			\\
			&
			\| \tilde{\boldsymbol e}^{n, (3)}  \|_2^2
			+(0.2331\tilde{M} +{C}_5)  k \| \tilde{\boldsymbol e}^{n, (4)}  \|_2^2
			+{C}_7  k\| {\underline{\boldsymbol e}}^{n+1}  \|_2^2
			+  k (  \| \tau ^n \|_2^2 + \|  \underline{\boldsymbol e}^{n+1}  \|_2^2 ).
		\end{aligned}
		\label{convergence-4-7}
\end{equation}
Meanwhile, an application of triangular inequality indicates that
\begin{equation}
	\begin{aligned}
		&
		\| \underline{\boldsymbol e}^{n+1}  \|_2  \le  \| \tilde{\boldsymbol e}^{n, (4)} \|_2 + \| \underline{\boldsymbol e}^{n+1} - \tilde{\boldsymbol e}^{n, (4)} \|_2 ,
		\quad \mbox{so that}
		\\
		&
		{C}_7  k \| \underline{\boldsymbol e}^{n+1}  \|_2^2  \le  2 {C}_7  k ( \| \tilde{\boldsymbol e}^{n, (4)} \|_2^2
		+ \| \underline{\boldsymbol e}^{n+1} - \tilde{\boldsymbol e}^{n, (4)} \|_2^2 ) ,
		\quad \mbox{and}
		\\
		&
		2 {C}_7 k \| \underline{\boldsymbol e}^{n+1} - \tilde{\boldsymbol e}^{n, (4)} \|_2^2
		\le  \frac12 \|\underline{\boldsymbol e}^{n+1} - \tilde{\boldsymbol e}^{n, (4)}  \|_2^2 ,  \quad \mbox{provided that $ {C}_7  k \le \frac14$,}
	\end{aligned}
	\label{convergence-4-8}
\end{equation}
which is always valid under the linear refinement requirement, $C_1 h \le k \le C_2 h$, and the assumption that $k$ and $h$ are sufficiently small. Therefore, a substitution of~\eqref{convergence-4-8} into~\eqref{convergence-4-7} results in 
	\begin{equation}
		\begin{aligned}
			&
			\| \underline{\boldsymbol e}^{n+1} \|_2^2 - \| \underline{\boldsymbol e}^n \|_2^2 + \| \tilde{\boldsymbol e}^{n, (2)} - \underline{\boldsymbol e}^n \|_2^2
			+ \| \tilde{\boldsymbol e}^{n, (3)} - \tilde{\boldsymbol e}^{n, (2)} \|_2^2  + \| \tilde{\boldsymbol e}^{n, (4)} - \tilde{\boldsymbol e}^{n, (3)} \|_2^2
			\\
			&
			+ \frac12 \| \underline{\boldsymbol e}^{n+1} - \tilde{\boldsymbol e}^{n, (4)} \|_2^2
			+ 0.0652 \beta k  \| \nabla_h \tilde{\boldsymbol e}^{n, (2)} \|_2^2
			+ 0.1034\beta k  \| \nabla_h \tilde{\boldsymbol e}^{n, (3)} \|_2^2 
			\\
			&
			+ 0.0118 \beta k  \| \nabla_h \tilde{\boldsymbol e}^{n, (4)} \|_2^2\le 1.25\tilde{M}k \| \underline{\boldsymbol e}^{n}  \|_2^2  + 0.0025\beta k \| \nabla_h  \underline{\boldsymbol e}^{n}  \|_2^2  
			\\
			&
			+
			{C}_8  k \| \tilde{\boldsymbol e}^{n, (2)}  \|_2^2+{C}_9  k \| \tilde{\boldsymbol e}^{n, (3)}  \|_2^2 +{C}_{10}  k \| \tilde{\boldsymbol e}^{n, (4)}  \|_2^2
			+  k (  \| \tau ^n \|_2^2 + \|  \underline{\boldsymbol e}^{n+1}  \|_2^2 ) , 
		\end{aligned}
		\label{convergence-4-9}
\end{equation}
with~${C}_8=0.815\tilde{M}+{C}_1$, ${C}_9=0.6819\tilde{M} +{C}_3$, and ${C}_{10}=2{C}_7+0.2331\tilde{M} +{C}_5$. 

However, the standard $\ell^2$ error estimate~\eqref{convergence-4-9} does not allow one to apply discrete Gronwall inequality, due to the $H_{h}^{1}$ norms of the error function involved on the right-hand side. To overcome this difficulty, we apply the gradient operation on both sides of~\eqref{consistency-5-4}, with the linear refinement requirement~$k \le C^{'} h$, and see that 
	\begin{equation}
		\begin{aligned}
			& \| \nabla_h \underline{\boldsymbol e}^{n+1} \|_2 \le \| \nabla_h \tilde{\boldsymbol e}^{n, (4)} \|_2 + k \| \nabla_h \tau ^n \|_2+ 0.365 k\| \nabla_h { NLE}^{n, (2)} \|_2
			\\
			&+ 0.1319k\| \nabla_h { NLE}^{n, (3)} \|_2  + 0.2331 k  \| \nabla_h { NLE}^{n, (4)} \|_2 ,
			\\
			& \le \| \nabla_h \tilde{\boldsymbol e}^{n, (4)} \|_2 + C^{'} \|  \tau ^n \|_2+ 0.365C^{'} \|  { NLE}^{n, (2)} \|_2 
			\\
			&+ 0.1319 C^{'} \|  { NLE}^{n, (3)} \|_2 + 0.2331 C^{'} \|  { NLE}^{n, (4)} \|_2,
			\\
			&\le \| \nabla_h \tilde{\boldsymbol e}^{n, (4)} \|_2+  C^{'} \|  \tau ^n \|_2+C^{'}\tilde{M} \Big( 0.365(\| \tilde{\boldsymbol e}^{n, (2)}\|_2+\|\nabla_h \tilde{\boldsymbol e}^{n, (2)}\|_2)\Big)
			\\
			& +C^{'}\tilde{M} \Big(0.1319 (\| \tilde{\boldsymbol e}^{n, (3)}\|_2+\|\nabla_h \tilde{\boldsymbol e}^{n, (3)}\|_2) + 0.2331 (\| \tilde{\boldsymbol e}^{n, (4)}\|_2+\|\nabla_h \tilde{\boldsymbol e}^{n, (4)}\|_2) \Big).
		\end{aligned}
		\label{convergence-4-10}
\end{equation}
Meanwhile, the following result could be derived at the previous time step:
	\begin{equation}
		\begin{aligned}
			& \| \nabla_h \underline{\boldsymbol e}^{n} \|_2^2
			\le 2C^{'} \|  \tau ^{n-1} \|_2^2  +  0.73C^{'}\tilde{M}  (\| \tilde{\boldsymbol e}^{n-1, (2)}\|_2^2+\|\nabla_h \tilde{\boldsymbol e}^{n-1, (2)}\|_2^2) 
			+ 0.2638C^{'}\tilde{M}\\
			&
			(\| \tilde{\boldsymbol e}^{n-1, (3)}\|_2^2+\|\nabla_h \tilde{\boldsymbol e}^{n-1, (3)}\|_2^2)+ 0.4662C^{'}\tilde{M} \| \tilde{\boldsymbol e}^{n-1, (4)}\|_2^2 +2(0.2331C^{'}\tilde{M}+1)\|\nabla_h \tilde{\boldsymbol e}^{n-1, (4)}\|_2^2 .
		\end{aligned}
		\label{convergence-4-11}
\end{equation}
In turn, a substitution of~\eqref{convergence-4-11} into~\eqref{convergence-4-9} yields 

	\begin{equation}
		\begin{aligned}
			&
			\| \underline{\boldsymbol e}^{n+1} \|_2^2 - \| \underline{\boldsymbol e}^n \|_2^2 + \| \tilde{\boldsymbol e}^{n, (2)} - \underline{\boldsymbol e}^n \|_2^2
			+ \| \tilde{\boldsymbol e}^{n, (3)} - \tilde{\boldsymbol e}^{n, (2)} \|_2^2
			\\
			&
			+ \| \tilde{\boldsymbol e}^{n, (4)} - \tilde{\boldsymbol e}^{n, (3)} \|_2^2+ \frac12 \| \underline{\boldsymbol e}^{n+1} - \tilde{\boldsymbol e}^{n, (4)} \|_2^2
			+ 0.0652 \beta k  \| \nabla_h \tilde{\boldsymbol e}^{n, (2)} \|_2^2
			\\
			&
			+ 0.1034\beta k  \| \nabla_h \tilde{\boldsymbol e}^{n, (3)} \|_2^2 + 0.0118 \beta k  \| \nabla_h \tilde{\boldsymbol e}^{n, (4)} \|_2^2
			\\
			&
			\le 1.25\tilde{M}k\| \underline{\boldsymbol e}^{n}  \|_2^2  + 0.0025\beta k \big(\gamma_1\| \tilde{\boldsymbol e}^{n-1, (2)}\|_2^2+
			\gamma_2\| \tilde{\boldsymbol e}^{n-1, (3)}\|_2^2 +\gamma_3\| \tilde{\boldsymbol e}^{n-1, (4)}\|_2^2
			\\
			&
			+\gamma_1\| \nabla_h \tilde{\boldsymbol e}^{n-1, (2)}\|_2^2+
			\gamma_2\| \nabla_h \tilde{\boldsymbol e}^{n-1, (3)}\|_2^2 +\gamma_3\| \nabla_h \tilde{\boldsymbol e}^{n-1, (4)}\|_2^2\big)+{C}_8  k \| \tilde{\boldsymbol e}^{n, (2)}  \|_2^2
			\\
			&
			+{C}_9  k \| \tilde{\boldsymbol e}^{n, (3)}  \|_2^2 +{C}_{10}  k \| \tilde{\boldsymbol e}^{n, (4)}  \|_2^2
			+  k \|  \underline{\boldsymbol e}^{n+1}  \|_2^2 +{C}_{11} k (\| \tau ^n \|_2^2+\| \tau ^{n-1} \|_2^2 ) , 
		\end{aligned}
		\label{convergence-4-12}
\end{equation}

	\begin{equation}
		\begin{aligned}
			&
			\| \underline{\boldsymbol e}^{n+1} \|_2^2 - \| \underline{\boldsymbol e}^n \|_2^2 + \| \tilde{\boldsymbol e}^{n, (2)} - \underline{\boldsymbol e}^n \|_2^2
			+ \| \tilde{\boldsymbol e}^{n, (3)} - \tilde{\boldsymbol e}^{n, (2)} \|_2^2+ \| \tilde{\boldsymbol e}^{n, (4)} - \tilde{\boldsymbol e}^{n, (3)} \|_2^2
			\\
			&
			+ \frac12 \| \underline{\boldsymbol e}^{n+1} - \tilde{\boldsymbol e}^{n, (4)} \|_2^2
			+ 0.0652 \beta k  \| \nabla_h \tilde{\boldsymbol e}^{n, (2)} \|_2^2+ 0.1034\beta k  \| \nabla_h \tilde{\boldsymbol e}^{n, (3)} \|_2^2
			\\
			&
			+ 0.0118 \beta k  \| \nabla_h \tilde{\boldsymbol e}^{n, (4)} \|_2^2+k(\| \nabla_h \underline{\boldsymbol e}^{n+1} \|_2^2-\| \nabla_h \underline{\boldsymbol e}^{n} \|_2^2)
			\\
			&
			\le 1.25\tilde{M}k \| \underline{\boldsymbol e}^{n}  \|_2^2  + 0.0025\beta k \big(\gamma_1\| \tilde{\boldsymbol e}^{n-1, (2)}\|_2^2+
			\gamma_2\| \tilde{\boldsymbol e}^{n-1, (3)}\|_2^2 +\gamma_3\| \tilde{\boldsymbol e}^{n-1, (4)}\|_2^2
			\\
			&
			+\gamma_1\| \nabla_h \tilde{\boldsymbol e}^{n-1, (2)}\|_2^2+
			\gamma_2\| \nabla_h \tilde{\boldsymbol e}^{n-1, (3)}\|_2^2 +\gamma_3\| \nabla_h \tilde{\boldsymbol e}^{n-1, (4)}\|_2^2\big) +k\big(  \gamma_1\|\nabla_h \tilde{\boldsymbol e}^{n, (2)}\|_2^2
			\\
			&
			+ \gamma_2\|\nabla_h \tilde{\boldsymbol e}^{n, (3)}\|_2^2+\gamma_3\|\nabla_h \tilde{\boldsymbol e}^{n, (4)}\|_2^2    \big)+{C}_{12}   k \| \tilde{\boldsymbol e}^{n, (2)}  \|_2^2+{C}_{13}   k \| \tilde{\boldsymbol e}^{n, (3)}  \|_2^2
			\\
			&
			+{C}_{14}   k \| \tilde{\boldsymbol e}^{n, (4)}  \|_2^2
			+  k \|  \underline{\boldsymbol e}^{n+1}  \|_2^2 +{C}_{15} k (\| \tau ^n \|_2^2+\| \tau ^{n-1} \|_2^2 ).
		\end{aligned}
		\label{convergence-4-13}
\end{equation}
with $\gamma_1 =0.73C^{'}\tilde{M}$, $\gamma_2 = 0.2638C^{'}\tilde{M}$ and $\gamma_3 =2+ 0.4662C^{'}\tilde{M}$. Therefore, with the help of the triangular inequalities: 
\begin{equation}
	\begin{aligned}
		&
		\|  \tilde{\boldsymbol e}^{n,(2)}  \|_2 \le \| \underline{\boldsymbol e}^{n} \|_2 + \| \tilde{\boldsymbol e}^{n, (2)} - \underline{\boldsymbol e}^{n} \|_2  ,
		\\
		&
		\|  \tilde{\boldsymbol e}^{n,(3)}  \|_2 \le \| \underline{\boldsymbol e}^{n} \|_2 + \| \tilde{\boldsymbol e}^{n, (2)} - \underline{\boldsymbol e}^{n} \|_2
		+ \| \tilde{\boldsymbol e}^{n, (3)} - \tilde{\boldsymbol e}^{n, (2)} \|_2 ,
		\\
		&
		\|  \tilde{\boldsymbol e}^{n,(4)}  \|_2 \le \| \underline{\boldsymbol e}^{n} \|_2 + \| \tilde{\boldsymbol e}^{n, (2)} - \underline{\boldsymbol e}^{n} \|_2
		+ \| \tilde{\boldsymbol e}^{n, (3)} - \tilde{\boldsymbol e}^{n, (2)} \|_2
		+ \| \tilde{\boldsymbol e}^{n, (4)} - \tilde{\boldsymbol e}^{n, (3)} \|_2 , 
	\end{aligned}
	\label{convergence-4-14}
\end{equation}
we get the following estimate:

	\begin{equation}
		\begin{aligned}
			&
			\| \underline{\boldsymbol e}^{n+1} \|_2^2 - \| \underline{\boldsymbol e}^n \|_2^2+k(\| \nabla_h \underline{\boldsymbol e}^{n+1} \|_2^2-\| \nabla_h \underline{\boldsymbol e}^{n} \|_2^2)
			+ (0.0652 \beta-\gamma1) k  \| \nabla_h \tilde{\boldsymbol e}^{n, (2)} \|_2^2
			\\
			&
			+ (0.1034\beta-\gamma2) k  \| \nabla_h \tilde{\boldsymbol e}^{n, (3)} \|_2^2
			+ (0.0118 \beta-\gamma3) k  \| \nabla_h \tilde{\boldsymbol e}^{n, (4)} \|_2^2
			\\
			&
			\le {C}k( \| \underline{\boldsymbol e}^{n-1}  \|_2^2+\| \underline{\boldsymbol e}^{n}  \|_2^2+\| \underline{\boldsymbol e}^{n+1}  \|_2^2  )+{C}_{15} k (\| \tau ^n \|_2^2+\| \tau ^{n-1} \|_2^2 ),
			\\
			&
			+0.0025\beta k \big(\gamma_1\| \nabla_h \tilde{\boldsymbol e}^{n-1, (2)}\|_2^2+
			\gamma_2| \nabla_h \tilde{\boldsymbol e}^{n-1, (3)}\|_2^2 +\gamma_3\| \nabla_h \tilde{\boldsymbol e}^{n-1, (4)}\|_2^2\big).
		\end{aligned}
		\label{convergence-4-15}
\end{equation}
Finally, an application of discrete Gronwall inequality \cite{Girault1986} leads to the desired error estimate at the next time step: 
\begin{equation}
	\| \underline{\boldsymbol e}^{n+1} \|_2 + (k \| \nabla_h \underline{\boldsymbol e}^{n+1} \|_2)^\frac12 \le {C} (k^3+h^4) ,  \label{convergence-22} 	
\end{equation}
which comes from the fact that $652\beta >(25\beta+10000)\gamma_{1}, 1034\beta >(25\beta+10000)\gamma_{2},\\ 118\beta >(25\beta+10000)\gamma_{3}$, where $\gamma_1=0.73C^{'}\tilde{M}, \gamma_2=0.2638C^{'}\tilde{M}, \gamma_3=2+0.4662C^{'}\tilde{M}$. In particular, the local truncation error estimate, $\| \tau ^n \|^2 , \| \tau ^{n-1} \|^2 \le { C} (k^3 + h^4)$, was used in the derivation.

As a result, we see that the a-priori assumption~\eqref{bound-2} has also been validated at the next time step $t^{n+1}$, provided that $k$ and $h$ are sufficiently small.

By a mathematical induction argument, the higher order error estimate~\eqref{convergence-22} is valid for any time step. Of course, the convergence estimate~\eqref{convergence-0} becomes a direct consequence of the following identity:
\begin{equation}
	\Phi^n - \boldsymbol m^n = \underline{\boldsymbol e}^n - h^2 \Phi^{(1)} ,  \label{convergence-23} 	
\end{equation}
which comes from the constructed profile $\underline{\Phi}^n = \Phi^n + h^2 \Phi^{(1), n}$. The proof of Theorem~\ref{thm: convergence} is completed.

\section{Numerical results} \label{section:numercial tests}
In this section, we perform 1D and 3D numerical experiments to verify the theoretical canalysis in Section~\ref{sec: convergence}. For simplicity, we set $\epsilon=1$, $\boldsymbol{f}=0$ in \eqref{eq-9}, and $\alpha=0.01$, $\beta=3$ in the next accuracy test. The 1-D exact solution is taken to be
$$
\boldsymbol{m}_{e}=(\cos (X) \sin t, \sin (X) \sin t, \cos t)^{T}, \quad \mbox{with} \, \, \,  
X=x^{2}(1-x)^{2} . 
$$
The 3-D exact solution is chosen to be 
$$
\boldsymbol{m}_{e}=(\cos (X Y Z) \sin t, \sin (X Y Z) \sin t, \cos t)^{T},
$$
where $X=x^{2}(1-x)^{2}, Y=y^{2}(1-y)^{2}, Z=z^{2}(1-z)^{2}$. Clearly the homogeneous Neumann boundary condition \eqref{eq-2} is satisfied and a forcing term $\boldsymbol{f}_{e}=\partial_{t} \boldsymbol{m}_{e}-\alpha \Delta \boldsymbol{m}_{e}-\alpha\left|\nabla \boldsymbol{m}_{e}\right|^{2}+\boldsymbol{m}_{e} \times$ $\Delta \boldsymbol{m}_{e}$ is included into the nonlinear part $N(t,\boldsymbol{m})$.

\subsection{Accuracy test of IMEX-RK3}
In the 1-D computation, we fix $k =0.0001\times {{h}^{\frac{2}{3}}}$ and record the error in terms of $h$ in Table~\ref{table1}, fix $k=(1e-03)/(1e+04)$ and record the error in terms of $h$ in Table~\ref{table2}.
\begin{table}[htbp]
	\centering
	\caption{Temporal accuracy check in the 1-D case ($k =0.0001\times {{h}^{\frac{2}{3}}}$).}\label{table1}
		\begin{tabular}{cccc}
			\hline$k$ & $\left\|\boldsymbol m_{h}-\boldsymbol m_{e}\right\|_{\infty}$ & $\left\|\boldsymbol m_{h}-\boldsymbol m_{e}\right\|_{2}$ & $\left\|\boldsymbol m_{h}-\boldsymbol m_{e}\right\|_{H^{1}}$ \\
			\hline
			$0.1 / 3302$ & $1.8064e-04$ & $1.8228e-04$ & $2.4995e-03 $ \\
			$0.1 / 3659$ & $1.3617e-04$ & $1.3515e-04$ & $1.8393e-03$ \\
			$0.1 / 4000$ & $1.0275e-04$ & $1.0401e-04$ & $1.4117e-03$ \\
			$0.1 / 4327$ & $8.1831e-05$ & $8.2470e-05$ & $1.1171e-03$ \\
			order & $2.9492$ & $2.9336$ & $2.9782$ \\
			\hline
	\end{tabular}
\end{table}
\begin{table}[htbp]
	\centering
	\caption{Spatial accuracy check in the 1-D case ($k=(1e-03)/(1e+04)$).}\label{table2}
		\begin{tabular}{cccc}
			\hline$h$ & $\left\|\boldsymbol m_{h}-\boldsymbol m_{e}\right\|_{\infty}$ & $\left\|\boldsymbol m_{h}-\boldsymbol m_{e}\right\|_{2}$ & $\left\|\boldsymbol m_{h}-\boldsymbol m_{e}\right\|_{H^{1}}$ \\
			\hline
			$1 / 160 $ & $2.8966e-10$ & $8.5153e-11$ & $4.4271e-08$ \\
			$1 / 240 $ & $1.2934e-10$ & $3.7953e-11$ & $1.9679e-08$ \\
			$1 / 320 $ & $7.2876e-11$ & $2.1370e-11$ & $1.1070e-08$ \\
			$1 / 400 $ & $4.6676e-11$ & $1.3683e-11$ & $7.0848e-09$ \\
			order & $1.9922$ & $1.9953$ & $1.9998$ \\
			\hline
	\end{tabular}
\end{table}

\begin{table}[htbp]
	\centering
	\caption{Temporal accuracy check in the 3-D case ($k =0.001\times {{h}^{\frac{2}{3}}}$).}\label{table3}
		\begin{tabular}{cccc}
			\hline$k$ & $\left\|\boldsymbol m_{h}-\boldsymbol m_{e}\right\|_{\infty}$ & $\left\|\boldsymbol m_{h}-\boldsymbol m_{e}\right\|_{2}$ & $\left\|\boldsymbol m_{h}-\boldsymbol m_{e}\right\|_{H^{1}}$ \\
			\hline
			$1 / 1587$ & $3.5857e-04$ & $2.4600e-04$ & $4.4100e-04$ \\
			$1 / 2080$ & $1.5051e-04$ & $1.0164e-04$ & $2.0036e-04$ \\
			$1 / 2520$ & $8.1408e-05$ & $5.7072e-05$ & $1.0807e-04$ \\
			$1 / 2924$ & $5.4348e-05$ & $3.7012e-05$ & $6.5389e-05$ \\
			order & $3.1103$ & $3.1020$ & $3.1180$ \\
			\hline
	\end{tabular}
\end{table}
In the 3-D computation, we also fix $k =0.001\times {{h}^{\frac{2}{3}}}$ and record the error in terms of $k$ in Table~\ref{table3}, fix $k=1/10000$ and record the error in terms of $h$ in Table~\ref{table4}.

\begin{table}[htbp]
	\centering
	\caption{Spatial accuracy check in the 3-D case ($k=1e-04$).}\label{table4}
		\begin{tabular}{cccc}
			\hline$h$ & $\left\|\boldsymbol m_{h}-\boldsymbol m_{e}\right\|_{\infty}$ & $\left\|\boldsymbol m_{h}-\boldsymbol m_{e}\right\|_{2}$ & $\left\|\boldsymbol m_{h}-\boldsymbol m_{e}\right\|_{H^{1}}$ \\
			\hline
			$1 / 4$ & $8.1432e-05$ & $5.7082e-05$ & $9.3202e-05$ \\
			$1 / 5$ & $5.4354e-05$ & $3.7020e-05$ & $6.1874e-05$ \\
			$1 / 6$ & $3.6180e-05$ & $2.6471e-05$ & $4.0970e-05$ \\
			$1 / 7$ & $2.7160e-05$ & $1.8529e-05$ & $3.0914e-05$ \\
			order & $1.9861$ & $1.9881$ & $1.9987$ \\
			\hline
	\end{tabular}
\end{table}

\begin{figure}[htbp]
	\centering
{\label{time_1D}\includegraphics[width=2.5in]{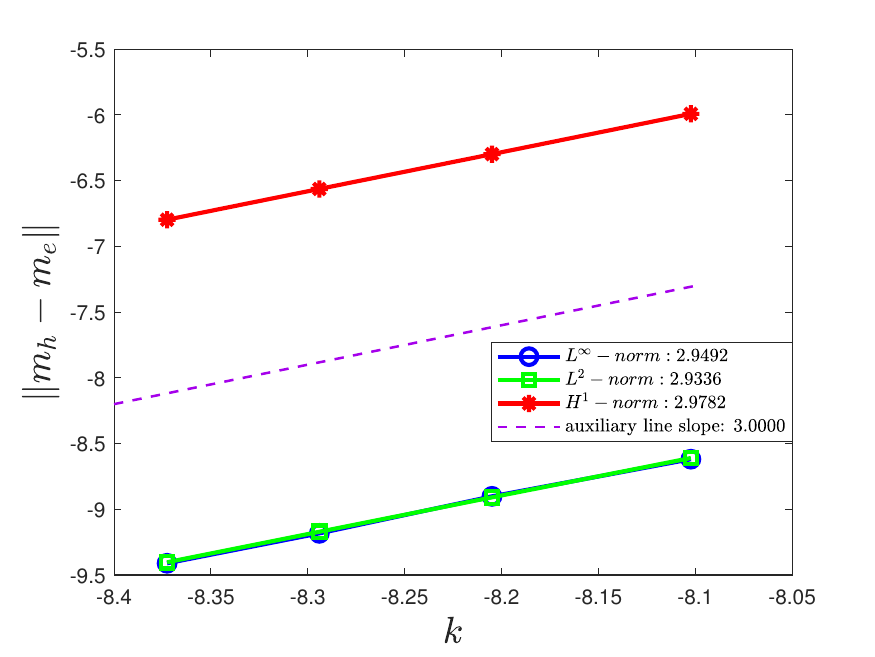}}
{\label{space_1D}\includegraphics[width=2.5in]{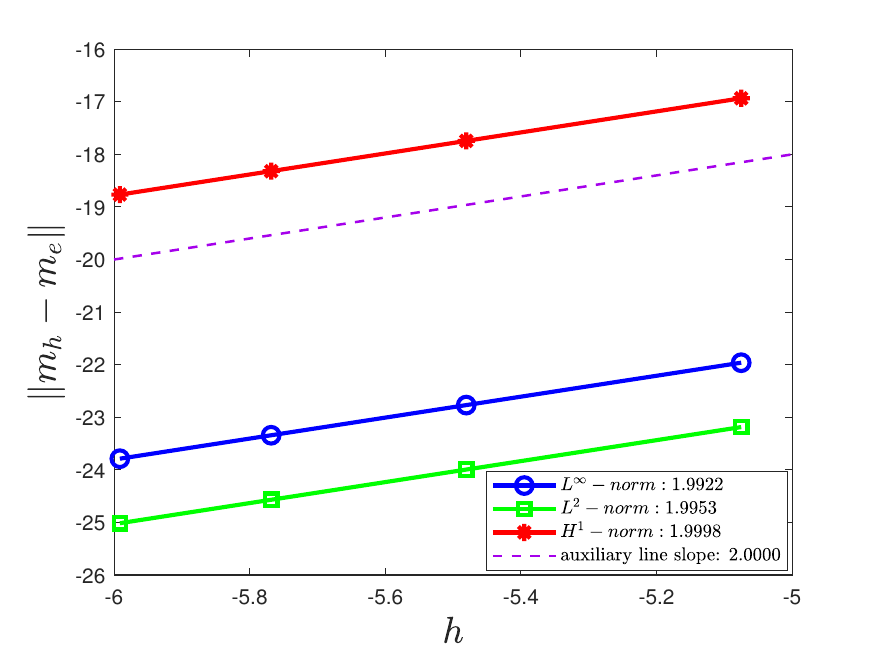}}
	\quad	
{\label{time_3D}\includegraphics[width=2.5in]{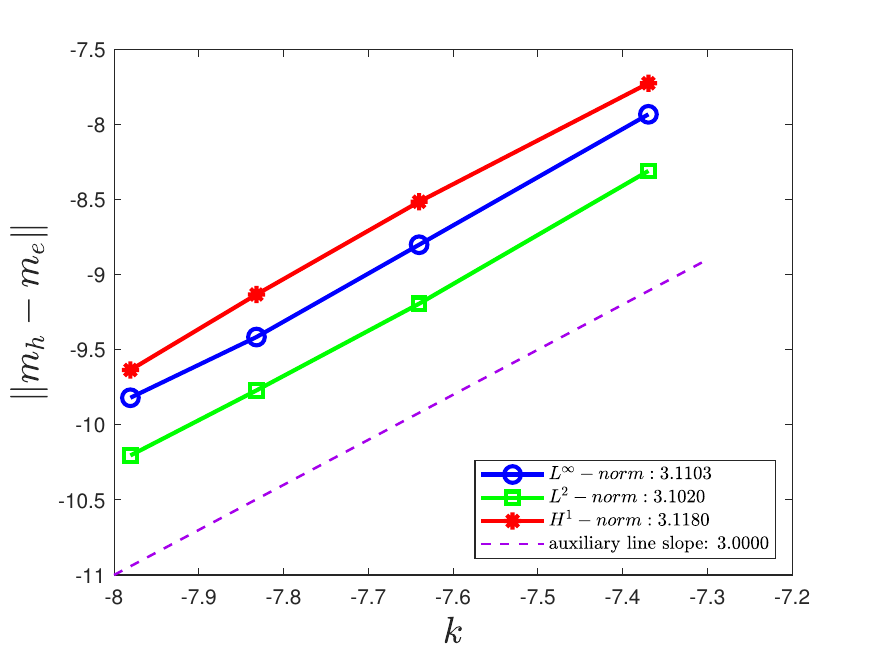}}
{\label{space_3D}\includegraphics[width=2.5in]{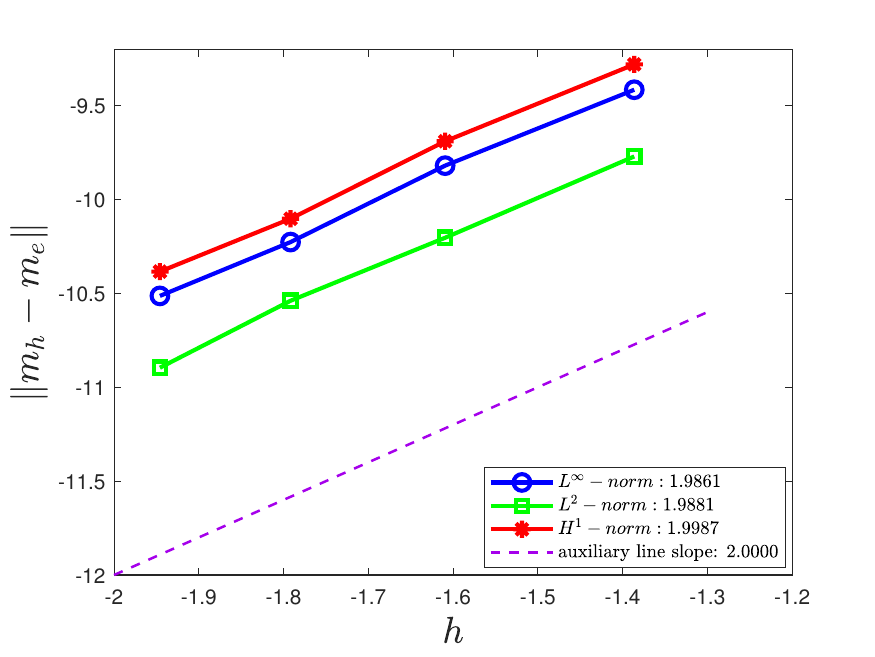}}
	\caption{Temporal and spatial accuracy orders in the 1-D and 3-D domains computations. Top row: 1-D; Bottom row: 3-D.}
	\label{rk2 cpu time}
\end{figure}

\subsection{Dependence on the damping parameter}
The GSPM method \cite{wang2001gauss} is unconditionally stable with constant coefficients and SPD structure, while its primary disadvantage is associated with its first-order accuracy in time. The SIPM \cite{xie2020second,chen2021convergence} is indeed a second-order-in-time method, while the non-symmetric structure and variable coefficients have led to more expensive computational costs. In addition, the two above-mentioned methods have focused on small damping parameter, nevertheless, large damping parameter has also been considered in the numerical design for real micromagnetics in general. Afterwards, the SIPM scheme (with large damping) in \cite{cai2022second} has greatly improved the computational efficiency, since only three Poisson solvers are needed at each time step. Meanwhile, this numerical approach only works if $\alpha>1$, while most magnetic materials correspond to $\alpha\ll 1$. On the other hand, for the BDF schemes of orders 3 to 5~\cite{Lubich2021}, coupled with higher-order finite element spatial discretization, a positive lower bound on the damping $\alpha$ is needed to ensure a numerical stability. In more details, the damping parameter satisfies $\alpha > \alpha_k$ with $\alpha_k=0.0913, 0.4041, 4.4348$ for orders $k=3,4,5$, respectively, for the BDF-k method analyzed in \cite{Lubich2021}. Therefore, it is worthwhile to design an efficient and higher order accurate numerical scheme that is unconstrained by the damping parameter $\alpha$.

To investigate the dependence on the damping parameter for the proposed IMEX-RK3 scheme, two different damping parameters, $\alpha=0.01, 0.1$, are taken, and $k =0.0001 \times {{h}^{\frac{2}{3}}}$ is fixed. The results of 1-D and 3-D corresponding examples are presented in Table~\ref{table6} and Table~\ref{table7}. It is observed that the choice of $\alpha$ is arbitrary, and the third-order accuracy is preserved in the temporal discretization. Based upon these results, it is clear that the proposed IMEX-RK3 method works well for general artificial damping parameters. More comparison results and details are displayed in Table~\ref{table5}. In fact, we set $\beta > 1$ if $\alpha\ll 1$, and $\beta=\alpha$ if $\alpha \ge 1$, then apply the IMEX-RK numerical scheme. As a result, the proposed numerical method works for a general damping parameter.

\begin{table}[htbp]
		\centering
	\caption{Comparison between the GSPM, BDF3, SIPM, SIPM with large damping and the IMEX-RK3 proposed scheme.}\label{table5}
	\begin{tabular}{cccc}
		\hline Property or number & Scope of $\alpha$ & Symmetry & Accuracy in time \\
		\hline GSPM   & not arbitrary & Yes & ${O}\left(k\right)$ \\
		BDF3    & $\alpha>0.0913$ & No & ${O}\left(k^3\right)$ \\
		SIPM    & arbitrary  & No & ${O}\left(k^2\right)$ \\
		SIPM with large damping   & $\alpha>1$ & Yes & ${O}\left(k^2\right)$ \\
		IMEX-RK3 proposed & arbitrary & Yes & ${O}\left(k^3\right)$ \\
		\hline
	\end{tabular}
\end{table}

\begin{table}[htbp]
	\centering
	\small
	\renewcommand{\arraystretch}{1.2}
	\setlength{\tabcolsep}{0.4mm}{
		\caption{1-D numerical errors of the IMEX-RK3 scheme.}\label{table6}
		\begin{tabular}{|c|c|ccc|ccc|}
			\hline  \multirow{2}{*}{$\beta$}&\multirow{2}{*} {$k$} & \multicolumn{3}{c|}{$\alpha=0.001$} & \multicolumn{3}{c|}{$\alpha=0.1$} \\
			\cline { 3 - 8 } & & $L^{\infty}$ & $L^{2}$ & $H^{1}$ & $L^{\infty}$  & $L^{2}$ & $H^{1}$ \\
			
			\hline                & 	$0.1/3302$   & $1.8127e-04$ & $1.8294e-04$ & $2.4964e-03$   & $1.7463e-04$ & $1.7719e-04$ & $2.5257e-03$\\
			
			$1$  & $0.1/3659$    & $1.3680e-04$ &$1.3565e-04$ & $1.8368e-03$  & $1.3054e-04$ & $1.3129e-04$ & $1.8604e-03$\\
			& $0.1/4000$   &$1.0323e-04$ &$ 1.0439e-04$ & $1.4099e-03$   &$9.8705e-05$ &$1.0104e-04$ & $1.4273e-03$ \\\hline
			\multicolumn{2}{|c|}{\text { order }}
			& $2.9311 $             & $2.9253$&$2.9796 $ &$2.9715 $            & $2.9289$&$2.9762$ \\
			
			\hline                & 	$0.1/3302$   & $1.8127e-04$ & $1.8294e-04$ &$2.4964e-03$   & $1.7463e-04$ & $1.7719e-04$ & $2.5257e-03$\\
			
			$3$  & $0.1/3659$    & $1.3680e-04$& $1.3565e-04$ & $1.8368e-03$  & $1.3054e-04$ & $1.3129e-04$ & $1.8604e-03$\\
			& $0.1/4000$   & $1.0323e-04$ &$ 1.0439e-04$ & $1.4099e-03$  &$9.8705e-05$ &$1.0104e-04$& $1.4273e-03$\\\hline
			\multicolumn{2}{|c|}{\text { order }}
			&$2.9311 $             & $2.9253$&$2.9796 $ &$2.9715 $             & $2.9289$&$2.9762$
			\\
			\hline                            & 	$0.1/3302$   & $1.8127e-04$ & $1.8294e-04$ & $2.4964e-03$  &$1.7463e-04$& $1.7719e-04$ & $2.5257e-03$\\
			
			$5$ & $0.1/3659$   & $1.3680e-04$ & $1.3565e-04$ & $1.8368e-03$   & $1.3054e-04$ & $1.3129e-04$& $1.8604e-03$\\
			& $0.1/4000$   & $1.0323e-04$ &$ 1.0439e-04$ & $1.4099e-03$   &$9.8705e-05$ &$1.0104e-04$ & $1.4273e-03$ \\\hline
			\multicolumn{2}{|c|}{\text { order }}
			& $2.9311 $            & $2.9253$&$2.9796 $&$2.9715 $            & $2.9289$&$2.9762$ 
			\\
			\hline
	\end{tabular}}
\end{table}

\begin{table}[htbp]
	\centering
	\small
	\renewcommand{\arraystretch}{1.2}
	\setlength{\tabcolsep}{0.4mm}{
		\caption{3-D numerical errors of the IMEX-RK3 scheme.}\label{table7}
		\begin{tabular}{|c|c|ccc|ccc|}
			\hline \multirow{2}{*}{  $\beta$} & \multirow{2}{*}{$k$} & \multicolumn{3}{c|}{$\alpha=0.001$} & \multicolumn{3}{c|}{$\alpha=0.1$} \\
			\cline { 3 - 8 } & & $L^{\infty}$ & $L^{2}$ & $H^{1}$ & $L^{\infty}$  & $L^{2}$ & $H^{1}$ \\
			
			\hline                & $1/2080$    & $1.5087e-04$ & $1.0163e-04$ & $2.0010e-04$   & $ 1.5183e-04$& $1.0209e-04 $& $2.0010e-04$\\
			
			$1$  & $1/2520$   & $ 8.1578e-05$ & $5.7080e-05$& $1.0798e-04$   & $ 8.3704e-05$ &$  5.7304e-05$ & $1.0842e-04$\\
			
			& $1/2924$   & $5.3962e-05$ &$3.6999e-05$ & $6.5334e-05$   &$5.2238e-05 $& $ 3.6101e-05$ & $6.5537e-05$ \\\hline
			\multicolumn{2}{|c|}{\text { order }}
			& $3.0275 $             & $2.9688 $&$3.2830$ &$ 3.1313 $&$ 3.0501$& $3.2733$ \\
			
			\hline                & $1/2080$    & $1.5087e-04$ & $1.0163e-04$ & $2.0010e-04$   & $ 1.5183e-04$ & $1.0209e-04 $& $2.0010e-04$\\
			
			$3$  & $1/2520$   & $ 8.1578e-05$ & $5.7080e-05$& $1.0798e-04$  & $ 8.3704e-05$ &$  5.7304e-05$ & $1.0842e-04$\\
			
			& $1/2924$   & $5.3962e-05$ &$3.6999e-05$ & $6.5334e-05$  &$5.2238e-05 $& $ 3.6101e-05$ & $6.5537e-05$ \\\hline
			\multicolumn{2}{|c|}{\text { order }}
			& $3.0275 $           & $2.9688 $&$3.2830$ &$ 3.1313 $&$ 3.0501$& $3.2733$\\
			
			\hline                & $1/2080$   & $1.5087e-04$ & $1.0163e-04$ & $2.0010e-04$   & $ 1.5183e-04$ & $1.0209e-04 $& $2.0010e-04$\\
			
		$5$  & $1/2520$   &$ 8.1578e-05$ & $5.7080e-05$ & $1.0798e-04$   & $ 8.3704e-05$ &$  5.7304e-05$ & $1.0842e-04$\\
			
			& $1/2924$   & $5.3962e-05$ &$3.6999e-05$ & $6.5334e-05$   &$5.2238e-05 $& $ 3.6101e-05$ & $6.5537e-05$ \\\hline
			\multicolumn{2}{|c|}{\text { order }}
			& $3.0275 $             & $2.9688 $&$3.2830$ &$ 3.1313 $&$ 3.0501$& $3.2733$ \\
			\hline
	\end{tabular}}
\end{table}

\section{Conclusions} \label{section:conclusion}
In this paper, we propose a third-order implicit-explicit Runge-Kutta (IMEX-RK3) numerical method to solve the Landau-Lifshitz equation. By introducing an artificial damping term, IMEX-RK method can achieve higher-order accuracy in time, with the order conditions satisfied. In the framework, we construct the third-order implicit-explicit Runge-Kutta scheme, and the stability condition is imposed. Moreover, in spite of the multi-stage nature and its complicated nonlinear terms, a rigorous optimal rate convergence analysis of this IMEX-RK3 method is provided. It is worth mentioning that the convergence analysis is valid for all damping parameter $\alpha >0$. In addition, its numerical accuracy and the insensitive dependence on the artificial damping parameter $\alpha$ have been verified in both the 1-D and 3-D computations. Numerical results have demonstrated that the IMEX-RK3 method works well for a general damping parameter, regardless of the small damping parameters in real micromagnetics simulations or the large damping parameters in theoretical works. In summary, the proposed numerical scheme not only preserves higher order accuracy and higher computational efficiency, but also its stability is not restricted by the magnitude of damping parameters, in comparison with many existing numerical methods.

\section*{Acknowledgments}
This work is supported in part by the grants NSFC  12271360 and 11501399 (R.~Du) and NSF DMS-2012269, DMS-2309548 (C.~Wang).


\end{document}